\newtheorem{theorem}{Theorem}[section]
\newtheorem{proposition}[theorem]{Proposition}
\newtheorem{example}[theorem]{Example}
\newenvironment{proof}[1][Proof]{\noindent \textbf{#1.} }{\ \ \  $\Box$}
\newtheorem{remark}[theorem]{Remark}
\numberwithin{equation}{section}
\title{Anticipated backward doubly stochastic differential equations}
\date{}
 \author{ Xiaoming Xu\thanks{E-mail: xmxu@njnu.edu.cn}
 \\ \small{Institute of Finance and Statistics, School of Mathematical Sciences,}
 \\ \small{Nanjing Normal University, Nanjing, 210023, China}
 }
\begin{document}

\maketitle

\begin{abstract}
%% Text of abstract
In this paper, we deal with a new type of differential equations
called anticipated backward doubly stochastic differential equations
(anticipated BDSDEs). The coefficients of these BDSDEs depend on the
future value of the solution $(Y, Z)$. We obtain the existence and
uniqueness theorem and a comparison theorem for the solutions of
these equations. Besides, as an application, we also establish a
duality between the anticipated BDSDEs and the delayed doubly
stochastic differential equations (delayed DSDEs).
\\
\par $\textit{Keywords:}$ anticipated backward doubly stochastic
differential equation, comparison theorem, duality
\end{abstract}

%% \linenumbers

%% main text

\section{Introduction}

Backward stochastic differential equation (BSDE) was considered the
general form the first time by Pardoux-Peng \cite{PP1} in 1990. In
the last twenty years, the theory of BSDEs has been studied with
great interest due to its applications in the pricing/hedging
problem (see e.g. \cite{KEM1997, KPQ}), in the stochastic control
and game theory (see e.g. \cite{KPQ, HL1}), and in the theory of
partial differential equations (see e.g. \cite{BBP1997, BQR, PP2}).

In order to give a probabilistic representation for a class of
quasilinear stochastic partial differential equations (SPDEs),
Pardoux-Peng \cite{PP3} first studied the backward doubly stochastic
differential equations (BDSDEs) of the general form
\begin{equation}\label{equation:PP BDSDE}
Y_t=\xi+\int_t^T f(s, Y_s, Z_s)ds +\int_t^T g(s, Y_s,
Z_s)d\overleftarrow{B}_s - \int_t^T Z_s d{W}_s,\ \ \ \ t \in [0, T],
\end{equation}
where the integral with respect to $\{B_t\}$ is a "backward It\^{o}
integral", and the integral with respect to $\{W_t\}$ is a standard
forward integral. Note that these two types of integrals are
particular cases of the It\^{o}-Skorohod integral, see
Nualart-Pardoux \cite{NP}. Pardoux-Peng \cite{PP3} proved that under
Lipschitz condition on the coefficients, BDSDE (\ref{equation:PP
BDSDE}) has a unique solution. Since then, the theory of BDSDEs has
been developed rapidly by many researchers. Bally-Matoussi \cite{BM}
gave the probabilistic representation of the solutions in Sobolev
space of semilinear SPDEs in terms of BDSDEs. Matoussi-Scheutzow
\cite{MS} studied BDSDEs and their applications in SPDEs. Shi et al.
\cite{SGL} proved a comparison theorem for BDSDEs with Lipschitz
condition on the coefficients. Lin \cite{Lin} obtained a generalized
comparison theorem and a generalized existence theorem of BDSDEs.

On the other hand, recently, Peng-Yang \cite{PY} (see also \cite{Y})
introduced the socalled anticipated BSDEs (ABSDEs) of the following
form:
\begin{equation*}
\left\{
\begin{tabular}{rlll}
$-dY_t$ &=& $f(t, Y_t, Z_t, Y_{t+\delta(t)},
Z_{t+\zeta(t)})dt-Z_tdW_t, $ & $
t\in[0, T];$\\
$Y_t$ &=& $\xi_t, $ & $t\in[T, T+K];$\\
$Z_t$ &=& $\eta_t, $ & $t\in[T, T+K],$
\end{tabular}\right.
\end{equation*}
where $\delta(\cdot): [0, T]\rightarrow \mathbb{R^+} \setminus
\{0\}$ and $\zeta(\cdot): [0, T]\rightarrow \mathbb{R^+} \setminus
\{0\}$ are continuous functions satisfying

$\mathbf{(a1)}$ there exists a constant $K \geq 0$ such that for
each $t\in[0, T],$
$$t+\delta(t) \leq T+K,\quad t+\zeta(t) \leq T+K;$$

$\mathbf{(a2)}$ there exists a constant $M \geq 0$ such that for
each $t\in[0, T]$ and each nonnegative integrable function
$g(\cdot)$, $$\int_t^T g(s+\delta(s))ds\leq M\int_t^{T+K}
g(s)ds,\quad \int_t^T g(s+\zeta(s))ds\leq M\int_t^{T+K} g(s)ds.$$
Peng-Yang \cite{PY} proved the existence and uniqueness of the
solution to the above equation, and studied the duality between
anticipated BSDEs and delayed SDEs.

In this paper, we are interested in the following BDSDEs with
coefficients depending on the future value of the solution $(Y, Z)$:
\begin{equation}\label{equation:main}
\left\{
\begin{tabular}{rlll}
$-dY_t$ &=& $f(t, Y_t, Z_t, Y_{t+\delta(t)}, Z_{t+\zeta(t)})dt$ &
\vspace{2mm}
\\ && $+ g(t, Y_t, Z_t, Y_{t+\delta(t)}, Z_{t+\zeta(t)}) d\overleftarrow{B}_t - Z_t d{W}_t, $
& $ t\in[0, T];$ \vspace{2mm}
\\
$Y_t$ &=& $\xi_t, $ & $t\in[T, T+K];$ \vspace{2mm}
\\
$Z_t$ &=& $\eta_t, $ & $t\in[T, T+K],$
\end{tabular}\right.
\end{equation}
where $\delta >0 $ and $\zeta >0$ satisfy $(a1)$-$(a2)$.

We prove that under proper assumptions, the solution of the above
anticipated BDSDE (ABDSDE) exists uniquely, and a comparison theorem
is given for the $1$-dimensional anticipated BDSDEs. It may be
mentioned here that, to deal with (\ref{equation:main}), the most
important thing for us is to establish the similar conclusions as in
\cite{PP3} and \cite{SGL} for BDSDE (\ref{equation:PP BDSDE}) with
$\xi$ belonging to a larger space. Besides, as an application, we
study a duality between the anticipated BDSDE and delayed DSDE.

The paper is organized as follows: in Section $2$, we make some
preliminaries. In Section $3$, we mainly study the existence and
uniqueness of the solutions of anticipated BDSDEs, and in Section
$4$, a comparison result is given. As an application, in Section
$5$, we establish a duality between an anticipated BDSDE and a
delayed DSDE. Finally in Section $6$, the conclusion and future work
are presented.

\section{Preliminaries}

Let $T > 0$ be fixed throughout this paper. Let $\{W_t\}_{t\in [0,
T]}$ and $\{B_t\}_{t\in [0, T]}$ be two mutually independent
standard Brownian motion processes, with values respectively in
$\mathbb{R}^d$ and $\mathbb{R}^l$, defined on a probability space
$(\Omega, \mathcal{F}, P)$. Let $\mathcal{N}$ denote the class of
$P-$null sets of $\mathcal{F}$. We define
$$\mathcal{F}_t := \mathcal{F}_{0, t}^W \vee \mathcal{F}_{t, T}^B,\
t\in [0, T];\ \ \mathcal{G}_s := \mathcal{F}_{0, s}^W \vee
\mathcal{F}_{s, T+K}^B,\ s\in [0, T + K],$$ where for any processes
$\{\varphi_t\}$,
$\mathcal{F}_{s,t}^\varphi=\sigma\{\varphi_r-\varphi_s, s \leq r
\leq t\} \vee \mathcal{N}$. We will use the following notations:
\begin{itemize}
\item{$L^2(\mathcal{G}_T; \mathbb{R}^m)$ := $\{\xi\in \mathbb{R}^m$
$|$ $\xi$ is a $\mathcal{G}_T$-measurable random variable such that
$E|\xi|^2< + \infty\};$}

\item{$L_{\mathcal{G}}^2(0, T; \mathbb{R}^m)$ := $\{ \varphi:
\Omega\times [0, T]\rightarrow \mathbb{R}^m$ $|$ $\varphi$ is a
$\mathcal{G}_t$-progressively measurable process such that
$E\int_0^T |\varphi_t|^2dt< + \infty\};$}

\item{$S_{\mathcal{G}}^2(0, T; \mathbb{R}^m)$ := $\{\varphi: \Omega\times
[0, T]\rightarrow \mathbb{R}^m$ $|$ $\varphi$ is a continuous and
$\mathcal{G}_t$-progressively measurable process such that
$E[\sup_{0 \leq t \leq T} |\varphi_t|^2]< + \infty\}.$}
\end{itemize}

\begin{remark}
It should be mentioned here that, the existing result about BDSDEs
are established almost under the condition that the terminal value
$\xi$ is $\mathcal{F}_T$-measurable (see \cite{PP3}, \cite{SGL},
etc.). In this paper, we will first treat the case when $\xi$ is
$\mathcal{G}_T$-measurable.
\end{remark}

\noindent For each $t\in [0, T]$, let
$$f(t, \cdot, \cdot, \cdot, \cdot): \Omega \times [0, T]\times \mathbb{R}^m\times
\mathbb{R}^{m\times d}\times L_{\mathcal{G}}^2(t, T+K;
\mathbb{R}^m)\times L_{\mathcal{G}}^2(t, T+K; \mathbb{R}^{m\times
d})\rightarrow L^2 (\mathcal{G}_t; \mathbb{R}^m),$$ $$g(t, \cdot,
\cdot, \cdot, \cdot): \Omega \times [0, T]\times \mathbb{R}^m\times
\mathbb{R}^{m\times d}\times L_{\mathcal{G}}^2(t, T+K;
\mathbb{R}^m)\times L_{\mathcal{G}}^2(t, T+K; \mathbb{R}^{m\times
d})\rightarrow L^2 (\mathcal{G}_t; \mathbb{R}^{m\times l}).$$ We
make the following hypotheses:

${\bf{(H1)}}$ There exists a constant $c > 0$ such that for any $r,
\bar{r} \in [t, T+K]$, $(t, y, z, \theta, \phi)$, $(t, y^\prime,
z^\prime, \theta^\prime, \phi^\prime)\in [0, T] \times \mathbb{R}^m
\times \mathbb{R}^{m\times d} \times L_{\mathcal{G}}^2(t, T+K;
\mathbb{R}^m)\times L_{\mathcal{G}}^2(t, T+K; \mathbb{R}^{m\times
d})$,
\begin{equation*}
|f(t, y, z, \theta_r, \phi_{\bar{r}})-f(t, y^\prime, z^\prime,
\theta_r^\prime, \phi_{\bar{r}}^\prime)|^2 \leq c(|y-
y^\prime|^2+|z-z^\prime|^2+E^{\mathcal{F}_t}[|\theta_r-\theta_r^\prime|^2+|\phi_{\bar{r}}-\phi_{\bar{r}}^\prime|^2]).
\end{equation*}

${\bf{(H2)}}$ $E[\int_0^T |f(s, 0, 0, 0, 0)|^2ds]< + \infty.$

${\bf{(H3)}}$ There exist  constants $c > 0$, $0 < \alpha_1 < 1$, $0
\leq \alpha_2 < \frac{1}{M}$, satisfying $0 < \alpha_1 + \alpha_2 M<
1$, such that for any $r, \bar{r} \in [t, T+K]$, $(t, y, z, \theta,
\phi)$, $(t, y^\prime, z^\prime, \theta^\prime, \phi^\prime)\in [0,
T] \times \mathbb{R}^m \times \mathbb{R}^{m\times d} \times
L_{\mathcal{G}}^2(t, T+K; \mathbb{R}^m)\times L_{\mathcal{G}}^2(t,
T+K; \mathbb{R}^{m\times d})$,
\begin{equation*}
|g(t, y, z, \theta_r, \phi_{\bar{r}})-g(t, y^\prime, z^\prime,
\theta_r^\prime, \phi_{\bar{r}}^\prime)|^2 \leq c(|y-
y^\prime|^2+E^{\mathcal{F}_t}|\theta_r-\theta_r^\prime|^2)+ \alpha_1
|z-z^\prime|^2+ \alpha_2
E^{\mathcal{F}_t}|\phi_{\bar{r}}-\phi_{\bar{r}}^\prime|^2.
\end{equation*}

${\bf{(H4)}}$ $E[\int_0^T |g(s, y, z, \theta, \phi)|^2ds]< +
\infty,$ for any $(y, z, \theta, \phi)$.

\section{Existence and uniqueness theorem}

In this section, we will mainly study the existence and uniqueness
of the solution to anticipated BDSDE (\ref{equation:main}). For this
purpose, we first consider a simple case when the coefficients $f$
and $g$ do not depend on the value or the future value of $(Y, Z)$:
\begin{equation}\label{eq only t}
Y_t = \xi_T+ \int_t^T f(s)ds + \int_t^T g(s) d\overleftarrow{B}_s -
\int_t^T Z_s d{W}_s,\ \ t\in[0, T],
\end{equation}
where $f\in L_{\mathcal{G}}^2(0, T; \mathbb{R}^m)$, $g\in
L_{\mathcal{G}}^2(0, T; \mathbb{R}^{m\times l})$ and $\xi_T \in
L^2(\mathcal{G}_T; \mathbb{R}^m)$.

\begin{theorem}\label{thm only t}
Given $\xi_T \in L^2(\mathcal{G}_T; \mathbb{R}^m)$, BDSDE (\ref{eq
only t}) has a unique solution $(Y, Z) \in L_\mathcal{G}^2(0, T;
\mathbb{R}^m)\times L_\mathcal{G}^2(0, T; \mathbb{R}^{m\times d})$.
\end{theorem}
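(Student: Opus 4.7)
The approach is to follow the Pardoux-Peng argument for linear BDSDEs, adapted to allow $\xi_T$ to be $\mathcal{G}_T$-measurable rather than $\mathcal{F}_T$-measurable (the only novelty here, as the Remark emphasizes). The plan has three stages: uniqueness, construction via martingale representation in an enlarged filtration, and verification that the resulting pair $(Y,Z)$ is in fact $\mathcal{G}$-adapted.

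For uniqueness, I would take two solutions and subtract. Since $f,g$ do not depend on $(Y,Z)$, the difference satisfies $Y_t-Y_t' = -\int_t^T(Z_s-Z_s')\,dW_s$. Conditioning on $\mathcal{G}_t$, the left side is $\mathcal{G}_t$-measurable while the conditional mean of the right side vanishes, because $W$-increments on $[t,T]$ are independent of $\mathcal{G}_t = \mathcal{F}_{0,t}^W \vee \mathcal{F}_{t,T+K}^B$ (independence of $W$ from $B$, plus independent increments of $W$). Hence $Y=Y'$ in $L^2_\mathcal{G}$, and It\^o's isometry then forces $Z=Z'$.

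For existence, I would introduce the genuine (increasing) filtration $\bar{\mathcal{F}}_t := \mathcal{F}_{0,t}^W \vee \mathcal{F}_{0,T+K}^B$ on $[0,T]$, which contains $\mathcal{G}_t$. The random variable $\zeta := \xi_T + \int_0^T f(s)\,ds + \int_0^T g(s)\,d\overleftarrow{B}_s$ lies in $L^2(\bar{\mathcal{F}}_T;\mathbb{R}^m)$. Because $\bar{\mathcal{F}}_0$ already exhausts $B$ on $[0,T+K]$, the $\bar{\mathcal{F}}$-martingale $E[\zeta\mid\bar{\mathcal{F}}_\cdot]$ is driven only by $W$, and the classical martingale representation theorem supplies $Z \in L^2_{\bar{\mathcal{F}}}(0,T;\mathbb{R}^{m\times d})$ with $\zeta = E[\zeta\mid\bar{\mathcal{F}}_0] + \int_0^T Z_s\,dW_s$. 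Setting $Y_t := \xi_T + \int_t^T f\,ds + \int_t^T g\,d\overleftarrow{B}_s - \int_t^T Z_s\,dW_s$, the BDSDE holds by construction.

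The main obstacle is upgrading from $\bar{\mathcal{F}}$-adaptedness to $\mathcal{G}$-adaptedness. For $Y$, I would rewrite the equation as $Y_t = E[\xi_T + \int_t^T f\,ds + \int_t^T g\,d\overleftarrow{B}_s \mid \mathcal{G}_t]$, using the same conditional-expectation trick as in uniqueness to kill $\int_t^T Z\,dW$; the right-hand side is manifestly $\mathcal{G}_t$-measurable. For $Z_s$, one must verify that no $\mathcal{F}_{0,s}^B$-randomness leaks into the $dW$-integrand. I would argue this by density: approximate $\zeta$ in $L^2$ by cylindrical functionals of $W$- and $B$-increments on the correct intervals, read off the $dW$-integrand by a Clark-Ocone-type formula, check by inspection that each approximating $Z_s$ depends only on $W_{[0,s]}$ and on $B$-increments over $[s,T+K]$, and pass to the limit. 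This step is the genuinely new content beyond \cite{PP3}: confirming that enlarging the terminal $\sigma$-algebra from $\mathcal{F}_{0,T}^W$ to $\mathcal{F}_{0,T}^W \vee \mathcal{F}_{T,T+K}^B$ does not introduce $\mathcal{F}_{0,s}^B$-dependence into the martingale integrand.
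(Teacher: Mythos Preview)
Your overall architecture matches the paper's proof almost exactly: the same enlarged filtration $\bar{\mathcal{F}}_t = \mathcal{F}_{0,t}^W \vee \mathcal{F}_{0,T+K}^B$ (the paper calls it $\mathcal{H}_t$), the same random variable $\zeta$, martingale representation to produce $Z$, the same formula for $Y_t$, and the same independence argument to upgrade $Y_t$ from $\mathcal{H}_t$- to $\mathcal{G}_t$-measurability. Uniqueness is handled identically up to cosmetics.

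The one genuine divergence is your treatment of the $\mathcal{G}$-adaptedness of $Z$. You propose a density/Clark--Ocone approximation, checking the support of each cylindrical approximant by hand and passing to the limit. This works, but the paper's argument is considerably shorter and avoids Malliavin calculus entirely: having already shown $Y_t$ is $\mathcal{G}_t$-measurable, one writes
\[
\int_t^T Z_s\,dW_s \;=\; \xi_T + \int_t^T f(s)\,ds + \int_t^T g(s)\,d\overleftarrow{B}_s - Y_t,
\]
and observes the right-hand side is $\mathcal{F}_{0,T}^W \vee \mathcal{F}_{t,T+K}^B$-measurable for each fixed $t$. Applying the martingale representation theorem in the filtration $(\mathcal{F}_{0,s}^W \vee \mathcal{F}_{t,T+K}^B)_{s\in[t,T]}$ (with $t$ fixed, $s$ running) forces $Z_s$ to be $\mathcal{F}_{0,s}^W \vee \mathcal{F}_{t,T+K}^B$-measurable. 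Since this holds for every $t\le s$, one lets $t\uparrow s$ and obtains $Z_s \in \mathcal{F}_{0,s}^W \vee \mathcal{F}_{s,T+K}^B = \mathcal{G}_s$. Your approximation route is correct but heavier; the paper's ``vary the left endpoint $t$'' trick is the cleaner way to strip off $\mathcal{F}_{0,s}^B$.
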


\begin{proof}
To prove the existence, we define a filtration by
$$\mathcal{H}_t := \mathcal{F}_{0, t}^W \vee \mathcal{F}_{0, T+K}^B,\ t\in [0, T +
K]$$ and a $\mathcal{H}_t$-square integrable martingale
$$M_t := E^{\mathcal{H}_t}[\xi_T + \int_0^T f(s)ds +\int_0^T g(s) d\overleftarrow{B}_s],\ \ t\in [0, T].$$
Thanks to It\^{o}'s martingale representation theorem, there exists
a process $Z \in L_{\mathcal{H}}^2 (0, T; \mathbb{R}^{m \times d})$
such that
$$M_t = M_0 + \int_0^t Z_s dW_s, \ \ t\in [0, T],$$
which implies $$M_t = M_T - \int_t^T Z_s dW_s, \ \ t\in [0, T].$$
Hence
$$E^{\mathcal{H}_t}[\xi_T + \int_0^T f(s)ds +\int_0^T g(s) d\overleftarrow{B}_s]=\xi_T + \int_0^T f(s)ds +\int_0^T g(s) d\overleftarrow{B}_s- \int_t^T Z_s dW_s.$$
Subtract $\int_0^t f(s)ds +\int_0^t g(s) d\overleftarrow{B}_s$ from
both sides, then we have
$$Y_t=\xi_T + \int_t^T f(s)ds +\int_t^T g(s) d\overleftarrow{B}_s- \int_t^T Z_s dW_s,$$
where
$$Y_t := E^{\mathcal{H}_t}[\xi_T + \int_t^T f(s)ds +\int_t^T g(s) d\overleftarrow{B}_s].$$
Next we show that $(Y, Z)$ are in fact $\mathcal{G}_t$-adapted. In
fact, it is obvious that $$Y_t = E[\Theta|\mathcal{G}_t \vee
\mathcal{F}_{0, t}^B],$$ where $\Theta:=\xi_T + \int_t^T f(s)ds
+\int_t^T g(s) d\overleftarrow{B}_s$ is $\mathcal{F}_{0, T}^W \vee
\mathcal{F}_{t, T+K}^B$ measurable. Note that $\mathcal{F}_{0, t}^B$
is independent of $\mathcal{G}_t \vee \sigma(\Theta)$, then we know
$$Y_t = E^{\mathcal{G}_t} [\Theta].$$
Now $$\int_t^T Z_s dW_s=\xi_T + \int_t^T f(s)ds +\int_t^T g(s)
d\overleftarrow{B}_s- Y_t,$$ and the right side is $\mathcal{F}_{0,
T}^W \vee \mathcal{F}_{t, T+K}^B$ measurable. Then from It\^{o}'s
martingale representation theorem, $(Z_s)_{s\in[t, T]}$ is
$\mathcal{F}_{0, s}^W \vee \mathcal{F}_{t, T+K}^B$ adapted, which
implies $Z_s$ is $\mathcal{F}_{0, s}^W \vee \mathcal{F}_{t, T+K}^B$
measurable for any $t \leq s$. Thus $Z_s$ is $\mathcal{F}_{0, s}^W
\vee \mathcal{F}_{s, T+K}^B$ measurable.

To show the uniqueness. We suppose that $(\bar{Y}, \bar{Z})$ is the
difference of two solutions. Then
$$\bar{Y}_t + \int_t^T \bar{Z}_sdW_s=0, \ \ t\in [0, T].$$
Hence $$E |\bar{Y}_t|^2 + E \int_t^T |\bar{Z}_s|^2 ds =0,$$ which
implies $\bar{Y}_t \equiv 0,\ a.s.$ and $\bar{Z}_t \equiv 0\ a.s.,\
a.e..$
\end{proof}

Now we establish the main result of this part.

\begin{theorem}\label{thm existence uniqueness}
Assume that $(a1)$-$(a2)$ and $(H1)$-$(H4)$ hold. Then for given
$(\xi, \eta) \in S_{\mathcal{G}}^2(T, T+K; \mathbb{R}^m)\times
L_\mathcal{G}^2(T, T+K; \mathbb{R}^{m \times d})$, the anticipated
BDSDE (\ref{equation:main}) has a unique solution $(Y, Z) \in
S_\mathcal{G}^2(0, T+K; \mathbb{R}^m)\times L_\mathcal{G}^2(0, T+K;
\mathbb{R}^{m\times d})$.
\end{theorem}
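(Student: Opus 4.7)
The plan is to realize $(Y,Z)$ as the unique fixed point of a contraction built directly on Theorem~\ref{thm only t}. Let $H_{\xi,\eta}\subset S_\mathcal{G}^2(0,T+K;\mathbb{R}^m)\times L_\mathcal{G}^2(0,T+K;\mathbb{R}^{m\times d})$ be the closed affine subspace of pairs coinciding with $(\xi,\eta)$ on $[T,T+K]$, equipped with the weighted norm
\[
\|(y,z)\|_{\beta,\rho}^2 := E\int_0^{T+K} e^{\beta s}\bigl(\rho|y_s|^2+|z_s|^2\bigr)\,ds,
\]
for parameters $\rho,\beta>0$ to be chosen later. Given $(y,z)\in H_{\xi,\eta}$, define $\Phi(y,z):=(Y,Z)$ by setting $(Y,Z)=(\xi,\eta)$ on $[T,T+K]$ and, on $[0,T]$, letting $(Y,Z)$ solve the coefficient-frozen BDSDE
\[
Y_t=\xi_T+\int_t^T f\bigl(s,y_s,z_s,y_{s+\delta(s)},z_{s+\zeta(s)}\bigr)ds+\int_t^T g\bigl(s,y_s,z_s,y_{s+\delta(s)},z_{s+\zeta(s)}\bigr)d\overleftarrow{B}_s-\int_t^T Z_s\,dW_s.
\]
Hypotheses (H1)--(H4) together with (a1)--(a2) and $(y,z)\in H$ ensure the two driver processes lie in $L_\mathcal{G}^2(0,T)$, so Theorem~\ref{thm only t} delivers a unique $(Y,Z)\in L_\mathcal{G}^2\times L_\mathcal{G}^2$; a routine It\^o / Burkholder--Davis--Gundy estimate then upgrades $Y$ to $S_\mathcal{G}^2$, so $\Phi$ maps $H_{\xi,\eta}$ into itself.

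The core step is to show $\Phi$ is a strict contraction for suitable $\mu,\rho,\beta$. Given $(y,z),(y',z')\in H_{\xi,\eta}$, write $\bar y,\bar z,\bar Y,\bar Z,\bar F,\bar G$ for the componentwise differences (the first four vanishing on $[T,T+K]$). Apply It\^o's formula for BDSDEs to $e^{\beta t}|\bar Y_t|^2$ on $[0,T]$, take expectations so the stochastic integrals drop, and split the cross term via $2\bar Y_s\bar F(s)\le\mu|\bar Y_s|^2+\mu^{-1}|\bar F(s)|^2$ to obtain
\[
(\beta-\mu)E\!\int_0^T\!\! e^{\beta s}|\bar Y_s|^2 ds+E\!\int_0^T\!\! e^{\beta s}|\bar Z_s|^2 ds\le E\!\int_0^T\!\! e^{\beta s}\bigl[|\bar G(s)|^2+\mu^{-1}|\bar F(s)|^2\bigr]ds.
\]
Insert the pointwise bounds from (H1) and (H3) for $|\bar F|^2$ and $|\bar G|^2$, and then invoke (a2) applied to $h(u)=e^{\beta u}|\bar y_u|^2$ (legal because $e^{\beta s}\le e^{\beta(s+\delta(s))}$ whenever $\beta,\delta>0$) to collapse the anticipated contributions: $E\int_0^Te^{\beta s}|\bar y_{s+\delta(s)}|^2 ds\le M\,E\int_0^{T+K}e^{\beta u}|\bar y_u|^2du$, and analogously for $\bar z_{s+\zeta(s)}$. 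The right-hand side then reduces to $A(\mu)\|\bar y\|_\beta^2+B(\mu)\|\bar z\|_\beta^2$ with $A(\mu)=c(1+\mu^{-1})(1+M)$ and $B(\mu)=\alpha_1+\alpha_2 M+\mu^{-1}c(1+M)$. Since $\alpha_1+\alpha_2 M<1$ by (H3), one first picks $\mu$ so large that $B(\mu)<1$, then $\rho$ so that $A(\mu)/\rho\le B(\mu)$, and finally $\beta>\mu$ so that $[\rho/(\beta-\mu)+1]\cdot B(\mu)<1$; this yields a contraction of $\Phi$ on $(H_{\xi,\eta},\|\cdot\|_{\beta,\rho})$, and the Banach fixed-point theorem produces the unique solution.

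The main obstacle I expect is keeping the $Z$-bookkeeping sharp. Both the $\alpha_1|z-z'|^2$ contribution in (H3) and the anticipated $\alpha_2 E^{\mathcal{F}_s}|\bar z_{s+\zeta(s)}|^2$ enter with \emph{fixed} coefficients that cannot be shrunk by any choice of $\mu,\rho,\beta$, so Young's inequality must be reserved for the $f$-side cross terms and never wasted on $|\bar Z|^2$-type quantities; the hypothesis $\alpha_1+\alpha_2 M<1$ in (H3) is precisely tight for the estimate to close. Uniqueness is then immediate from the contraction, while the boundary values on $[T,T+K]$ are built into the affine constraint defining $H_{\xi,\eta}$.
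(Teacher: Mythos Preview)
Your proposal is correct and follows essentially the same route as the paper: freeze the coefficients and invoke Theorem~\ref{thm only t}, then prove the resulting map is a strict contraction in a weighted norm by applying It\^o's formula to $e^{\beta t}|\bar Y_t|^2$, using Young's inequality only on the $f$-cross term, and collapsing the anticipated contributions via (a2). The only cosmetic difference is in the final parameter selection: the paper takes $\rho=A(\mu)/B(\mu)$ and $\beta=\mu+\rho$ exactly, which gives the contraction constant $B(\mu)$ in one line, whereas your condition $[\rho/(\beta-\mu)+1]B(\mu)<1$ arises from bounding $\rho\|\bar Y\|^2$ and $\|\bar Z\|^2$ separately and is slightly wasteful but still closes for $\beta$ large.
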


\begin{proof}
Denote by $\mathcal{S}$ the space of $(Y, Z)\in L_{\mathcal{G}}^2(0,
T+K; \mathbb{R}^m)\times L_{\mathcal{G}}^2(0, T+K; \mathbb{R}^{m
\times d})$ such that $(Y_t, Z_t)_{t\in [T, T+K]}= (\xi_t,
\eta_t)_{t\in [T, T+K]}$. Given $(y, z)\in \mathcal{S}$, we consider
the following equation:
\begin{equation}\label{equation: 3}
\left\{
\begin{tabular}{rlll}
$-dY_t$ &=& $f(t, y_t, z_t, y_{t+\delta(t)}, z_{t+\zeta(t)})dt$ &
\vspace{2mm}
\\ && $+ g(t, y_t, z_t, y_{t+\delta(t)}, z_{t+\zeta(t)}) d\overleftarrow{B}_t - Z_t d{W}_t, $
& $ t\in[0, T];$ \vspace{2mm}
\\
$Y_t$ &=& $\xi_t, $ & $t\in[T, T+K];$ \vspace{2mm}
\\
$Z_t$ &=& $\eta_t, $ & $t\in[T, T+K].$
\end{tabular}\right.
\end{equation}
It is obvious that the above equation is equivalent to the BDSDE
\begin{equation*}
\left\{
\begin{tabular}{rlll}
$-d\tilde{Y}_t$ &=& $f(t, y_t, z_t, y_{t+\delta(t)},
z_{t+\zeta(t)})dt$ & \vspace{2mm}
\\ && $+ g(t, y_t, z_t, y_{t+\delta(t)}, z_{t+\zeta(t)}) d\overleftarrow{B}_t - \tilde{Z}_t d{W}_t, $
& $ t\in[0, T];$ \vspace{2mm}
\\
$\tilde{Y}_T$ &=& $\xi_T\in \mathcal{G}_T,$
\end{tabular}\right.
\end{equation*}
which admits a unique solution in the space $S_{\mathcal{G}}^2(0, T;
\mathbb{R}^m)\times L_{\mathcal{G}}^2(0, T; \mathbb{R}^{m \times
d})$ according to Theorem \ref{thm only t}. Thus BDSDE
(\ref{equation: 3}) has a unique solution in $\mathcal{S}$. Define a
mapping $I$ from $\mathcal{S}$ into itself by $(Y, Z)=I(y, z)$, then
$(Y, Z)$ is the unique solution of BDSDE (\ref{equation: 3}).

Let $(y^\prime, z^\prime)$ be another element of $\mathcal{S}$, and
$(Y^\prime, Z^\prime)=I(y^\prime, z^\prime)$. We make the following
notations:
\begin{align*}
& \bar{y}=  y-y^\prime,\ \bar{z}=z-z^\prime,\ \bar{Y}=Y-Y^\prime,\
\bar{Z}=Z-Z^\prime,  \\&  \bar{f}_t =  f(t, y_t, z_t,
y_{t+\delta(t)}, z_{t+\zeta(t)})-f(t, y_t^\prime, z_t^\prime,
y_{t+\delta(t)}^\prime, z_{t+\zeta(t)}^\prime), \\
& \bar{g}_t =  g(t, y_t, z_t, y_{t+\delta(t)}, z_{t+\zeta(t)})-g(t,
y_t^\prime, z_t^\prime, y_{t+\delta(t)}^\prime,
z_{t+\zeta(t)}^\prime).
\end{align*}
For any $\beta >0$, apply It\^{o}'s formula to $e^{\beta
t}|\bar{Y}_t|^2$,
\begin{equation*}
\begin{tabular}{rll}
& & $e^{\beta t} |\bar{Y}_t|^2 + \int_t^T e^{\beta s}
[\beta|\bar{Y}_s|^2+|\bar{Z}_s|^2]ds$ \vspace{2mm} \\ && =  $2
\int_t^{T} e^{\beta s} \bar{Y}_s \bar{f}_sds + \int_t^T e^{\beta s}
|\bar{g}_s|^2 ds + 2\int_t^T e^{\beta s} \bar{Y}_s \bar{g}_s
d\overleftarrow{B}_s - 2\int_t^T e^{\beta s} \bar{Y}_s \bar{Z}_s
d{W}_s.$
\end{tabular}
\end{equation*}
Take mathematical expectation on both sides, then we have
\begin{equation*}
e^{\beta t} E|\bar{Y}_t|^2 + E \int_t^T e^{\beta s}
[\beta|\bar{Y}_s|^2+|\bar{Z}_s|^2]ds=2 E \int_t^{T} e^{\beta s}
\bar{Y}_s \bar{f}_s ds + E \int_t^T e^{\beta s}|\bar{g}_s|^2 ds.
\end{equation*}
Hence from $(A1)$, $(A2)$ and the inequality $2ab \leq \lambda a^2 +
\frac{1}{\lambda} b^2$,
\begin{equation*}
\begin{tabular}{rll}
& & $e^{\beta t} E|\bar{Y}_t|^2 + E \int_t^T e^{\beta s}
[\beta|\bar{Y}_s|^2+|\bar{Z}_s|^2]ds$ \vspace{2mm}
\\& & $ \leq
 E \int_t^{T} e^{\beta s} [\lambda |\bar{Y}_s|^2+ \frac{1}{\lambda}
|\bar{f}_s|^2] ds + E \int_t^T e^{\beta s}|\bar{g}_s|^2 ds$
\vspace{2mm}
\\
&& $\leq E\int_t^T e^{\beta s} [\lambda |\bar{Y}_s|^2+
(\frac{c}{\lambda}+c)(|\bar{y}_s|^2 + |\bar{y}_{s+\delta(s)}|^2)+
(\frac{c}{\lambda}+\alpha_1) |\bar{z}_s|^2 +
(\frac{c}{\lambda}+\alpha_2) |\bar{z}_{s+\zeta(s)}|^2 ] ds$
\vspace{2mm}
\\
&& $\leq E\int_t^{T+K} e^{\beta s} [\lambda |\bar{Y}_s|^2+
(\frac{c}{\lambda}+c)(1 + M)|\bar{y}_s|^2 +
(\frac{c}{\lambda}(1+M)+\alpha_1 +\alpha_2 M) |\bar{z}_s|^2] ds,$
\end{tabular}
\end{equation*}
which implies
\begin{equation*}
\begin{tabular}{rll}
& & $E \int_t^{T+K} e^{\beta s}
[(\beta-\lambda)|\bar{Y}_s|^2+|\bar{Z}_s|^2]ds$ \vspace{2mm}
\\
&& $\leq E\int_t^{T+K} e^{\beta s} [(\frac{c}{\lambda}+c)(1 +
M)|\bar{y}_s|^2 + (\frac{c}{\lambda}(1+M)+\alpha_1 +\alpha_2 M)
|\bar{z}_s|^2] ds$ \vspace{2mm}
\\
&& $= (\frac{c}{\lambda}(1+M)+\alpha_1 +\alpha_2 M) E\int_t^{T+K}
e^{\beta s} [\frac{c(1+ \lambda)(1+M)}{c(1+ M) + \lambda
(\alpha_1+\alpha_2 M)}|\bar{y}_s|^2 + |\bar{z}_s|^2] ds.$
\end{tabular}
\end{equation*}

Hence if we choose $\lambda=\lambda_0$ satisfying
$\bar{c}:=\frac{c}{\lambda_0}(1+M)+\alpha_1 +\alpha_2 M <1 $, choose
$\beta=\lambda_0 + \frac{c(1+ \lambda_0)(1+M)}{c(1+ M) + \lambda_0
(\alpha_1+\alpha_2 M)}$, and denote $\gamma:=\frac{c(1+
\lambda_0)(1+M)}{c(1+ M) + \lambda_0 (\alpha_1+\alpha_2 M)}$, then
we deduce
\begin{equation*}
E \int_t^{T+K} e^{\beta s} [\gamma|\bar{Y}_s|^2+|\bar{Z}_s|^2]ds
\leq \bar{c} E\int_t^{T+K} e^{\beta s} [\gamma|\bar{y}_s|^2 +
|\bar{z}_s|^2] ds.
\end{equation*}
Thus $I$ is a strict contraction on $\mathcal{S}$ and it has a
unique fixed point $(Y, Z)\in \mathcal{S}$. Now due to
Burkholder-Davis-Gundy inequality, it is easy to check that $Y \in
S_{\mathcal{G}}^2(0, T+K; \mathbb{R}^m)$. The proof is complete.
\end{proof}

\begin{remark}
In the proof of Theorem \ref{thm existence uniqueness}, we use the
norm
\begin{equation*}
|(Y, Z)|_{(\beta, \gamma)} \equiv \{E \int_0^{T+K}
e^{\beta_s}(\gamma |Y_s|^2+|Z_s|^2)ds\}^{\frac{1}{2}},
\end{equation*}
which is very convenient for us to establish a strict contraction
mapping. In fact, it is obvious that this new norm is equivalent to
both norms $|(Y, Z)|_{(\beta, 1)}$ and $|(Y, Z)|_{(0, 1)}$, and the
latter is just the general norm defined on the space
$L_\mathcal{G}^2(0, T+K; \mathbb{R}^m)\times L_\mathcal{G}^2(0, T+K;
\mathbb{R}^{m\times d})$.

\end{remark}

\section{Comparison theorem}

In this part, we are concerned with the following $1$-dimensional
anticipated BDSDEs:
\begin{equation}\label{equation:comparison j}
\left\{
\begin{tabular}{rlll}
$-dY_t^j$ &=& $f^j(t, Y_t^j, Z_t^j, Y_{t+\delta(t)}^j,
Z_{t+\zeta(t)}^j)dt+ g(t, Y_t^j, Z_t^j) d\overleftarrow{B}_t - Z_t^j
d{W}_t, $ & $ t\in[0, T];$ \vspace{2mm}
\\
$Y_t^j$ &=& $\xi_t^j, $ & $t\in[T, T+K],$ \vspace{2mm}
\\ $Z_t^j$ &=& $\eta_t^j, $ & $t\in[T, T+K],$
\end{tabular}\right.
\end{equation}
where $j=1, 2$, and $(a1)$-$(a2)$, $(H1)$-$(H4)$ hold. Then by
Theorem \ref{thm existence uniqueness}, (\ref{equation:comparison
j}) has a unique solution.

Our objective is to obtain a comparison result for these two
equations. For this purpose, we first consider a simple case when
the coefficients $f^j$ and $g$ do not depend on the future value of
$(Y^j, Z^j)$:
\begin{equation}\label{eq j only t}
Y_t^j = \xi_T^j+ \int_t^T f^j(s, Y_s^j, Z_s^j)ds + \int_t^T g(s,
Y_s^j, Z_s^j) d\overleftarrow{B}_s - \int_t^T Z_s^j d{W}_s,\ \
t\in[0, T].
\end{equation}

\begin{theorem}\label{thm comparison only t}
Let $(Y^{j}, Z^{j})\in S_\mathcal{G}^2(0, T; \mathbb{R})\times
L_\mathcal{G}^2(0, T; \mathbb{R}^d)$ $(j=1, 2)$ be the unique
solutions to BDSDEs (\ref{eq j only t}) respectively. If $\xi_T^1
\geq \xi_T^2,\ a.s.,$ and for any $(t, y, z) \in [0,
T]\times\mathbb{R}\times \mathbb{R}^d$, $f^1(t, y, z) \geq f^2(t, y,
z),\ a.s.$, then $Y_t^1 \geq Y_t^2,\ a.s.,$ for all $t\in [0,T]$.
\end{theorem}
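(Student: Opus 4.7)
The plan is to work with the difference processes $\hat{Y}:=Y^{1}-Y^{2}$ and $\hat{Z}:=Z^{1}-Z^{2}$, and to show that the negative part $\hat{Y}_t^- := (Y_t^1-Y_t^2)^-$ is identically zero. Subtracting the two equations gives a BDSDE for $(\hat{Y},\hat{Z})$ on $[0,T]$ with terminal value $\xi_T^1-\xi_T^2\ge 0$, driver $\Delta f_s := f^{1}(s,Y_s^1,Z_s^1)-f^{2}(s,Y_s^2,Z_s^2)$, and diffusion coefficient $\Delta g_s := g(s,Y_s^1,Z_s^1)-g(s,Y_s^2,Z_s^2)$. The strategy is to apply the Pardoux--Peng It\^o formula (valid for processes driven by both a forward and a backward It\^o integral, as in \cite{PP3,SGL}) to $\phi(\hat{Y}_t)$ with $\phi(x) = (x^-)^2$.

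Since $\phi$ is only $C^1$, I would use the standard mollification trick: approximate $\phi$ by a sequence of smooth convex functions $\phi_n$ with $\phi'_n(x)\to -2x^-$ and $\phi''_n(x)\to 2\mathbf{1}_{\{x<0\}}$ and pass to the limit. Taking expectation (the $dW$- and $d\overleftarrow{B}$-integrals become martingales with zero mean) and recalling that the backward-It\^o term contributes an extra $+\tfrac{1}{2}\phi''(\hat Y_s)|\Delta g_s|^2$ in the formula, one obtains
\begin{equation*}
E(\hat{Y}_t^-)^2 + E\!\int_t^T \mathbf{1}_{\{\hat{Y}_s<0\}}|\hat{Z}_s|^2\,ds = -2E\!\int_t^T \hat{Y}_s^-\,\Delta f_s\,ds + E\!\int_t^T \mathbf{1}_{\{\hat{Y}_s<0\}}|\Delta g_s|^2\,ds,
\end{equation*}
using $\hat{Y}_T^- = 0$.

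Next I would exploit the two hypotheses. Split $\Delta f_s = [f^{1}(s,Y_s^1,Z_s^1) - f^{1}(s,Y_s^2,Z_s^2)] + [f^{1}(s,Y_s^2,Z_s^2)-f^{2}(s,Y_s^2,Z_s^2)]$; by the assumption $f^1\ge f^2$ the second bracket is nonnegative, so multiplying by $-2\hat{Y}_s^- \le 0$ produces a nonpositive contribution that can be discarded. The first bracket is controlled by the Lipschitz condition (H1), and Young's inequality $2ab\le \varepsilon a^2+\varepsilon^{-1}b^2$ converts $-2\hat{Y}_s^-[f^{1}(s,Y_s^1,Z_s^1)-f^{1}(s,Y_s^2,Z_s^2)]$ into a piece bounded by $\varepsilon\mathbf{1}_{\{\hat{Y}_s<0\}}|\hat{Z}_s|^2 + C_\varepsilon(\hat{Y}_s^-)^2$. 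For the $g$-term, (H3) gives $|\Delta g_s|^2\le c|\hat{Y}_s|^2+\alpha_1|\hat{Z}_s|^2$, and the indicator $\mathbf{1}_{\{\hat{Y}_s<0\}}$ means the $|\hat{Z}_s|^2$ contribution is dominated by $\alpha_1\mathbf{1}_{\{\hat{Y}_s<0\}}|\hat{Z}_s|^2$. Since $\alpha_1<1$ I can choose $\varepsilon$ so that $\alpha_1+\varepsilon<1$ and absorb the $\hat Z$-terms into the left-hand side, leaving the integral inequality $E(\hat{Y}_t^-)^2 \le C\int_t^T E(\hat{Y}_s^-)^2\,ds$, from which Gronwall yields $E(\hat{Y}_t^-)^2=0$ for every $t\in[0,T]$, i.e.\ $Y_t^1\ge Y_t^2$ a.s.

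The main obstacle is the It\^o formula step: $\phi(x)=(x^-)^2$ is not $C^2$, so the BDSDE It\^o formula must be justified by mollification, and one must track carefully that the backward-It\^o contribution produces the correct sign and yields exactly the $\mathbf{1}_{\{\hat{Y}<0\}}|\Delta g|^2$ term that, together with $\alpha_1<1$, allows the $|\hat{Z}|^2$-term to be absorbed. Once this is in place, the rest is bookkeeping using (H1), (H3), and Gronwall.
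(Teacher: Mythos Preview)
Your proposal is correct and follows essentially the same route as the paper: the paper sets $\bar Y=Y^2-Y^1$ and applies It\^o's formula directly to $|\bar Y_t^+|^2$ (which coincides with your $(\hat Y_t^-)^2$), then performs exactly the same splitting of the driver, uses (H1) and (H3) with Young's inequality to absorb the $\mathbf{1}_{\{\bar Y>0\}}|\bar Z|^2$ term thanks to $\alpha_1<1$, and concludes by Gronwall. The only difference is cosmetic---your sign convention and your explicit mention of the mollification step, which the paper takes for granted.
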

\begin{proof}
Denote $$\bar{Y}_t:=Y_t^2-Y_t^1,\ \ \bar{Z}_t:=Z_t^2-Z_t^1,\ \
\bar{\xi}_T:=\xi_T^2-\xi_T^1,$$ then $(\bar{Y}, \bar{Z})$ satisfies
$$\bar{Y}_t=\bar{\xi}_T + \int_t^T [f^2(s, Y_s^2, Z_s^2)-f^1(s, Y_s^1, Z_s^1)]ds + \int_t^T [g(s, Y_s^2, Z_s^2)-g(s, Y_s^1, Z_s^1)]d \overleftarrow{B}_s - \int_t^T \bar{Z}_sdW_s.$$
Applying It\^{o}'s formula to $|\bar{Y}_t^+|^2$, we have
\begin{equation*}
\begin{tabular}{rll}
$|\bar{Y}_t^+|^2$ &
= & $|\bar{\xi}_T^+|^2 + 2\int_t^T \bar{Y}_s^+ [f^2(s, Y_s^2, Z_s^2)-f^1(s, Y_s^1, Z_s^1)]ds $ \vspace{5mm} \\
& & $+ 2\int_t^T \bar{Y}_s^+ [g(s, Y_s^2, Z_s^2)-g(s, Y_s^1, Z_s^1)]d \overleftarrow{B}_s- 2\int_t^T \bar{Y}_s^+ \bar{Z}_sdW_s$\vspace{5mm} \\
&& $-\int_t^T 1_{\{Y_s^2 > Y_s^1\}}|\bar{Z}_s|^2ds +\int_t^T 1_{\{Y_s^2 > Y_s^1\}} |g(s, Y_s^2, Z_s^2)-g(s, Y_s^1, Z_s^1)|^2ds.$\\
\end{tabular}
\end{equation*}
Taking expectation on both sides and noting that $\xi_T^1 \geq
\xi_T^2$, we get
\begin{equation*}
\begin{tabular}{rll}
$E|\bar{Y}_t^+|^2 + E-\int_t^T 1_{\{Y_s^2 > Y_s^1\}}|\bar{Z}_s|^2ds$
& = & $2E\int_t^T \bar{Y}_s^+ [f^2(s, Y_s^2, Z_s^2)-f^1(s, Y_s^1, Z_s^1)]ds $ \vspace{5mm} \\
& & $+ E\int_t^T 1_{\{Y_s^2 > Y_s^1\}} |g(s, Y_s^2, Z_s^2)-g(s, Y_s^1, Z_s^1)|^2ds.$\\
\end{tabular}
\end{equation*}
While,
\begin{equation*}
\begin{tabular}{rll}
& $2E\int_t^T \bar{Y}_s^+ [f^2(s, Y_s^2, Z_s^2)-f^1(s, Y_s^1,
Z_s^1)]ds$ \vspace{5mm}\\
 &=$2 E\int_t^T \bar{Y}_s^+
[f^2(s, Y_s^2, Z_s^2)-f^1(s, Y_s^2, Z_s^2)+f^1(s, Y_s^2,
Z_s^2)-f^1(s, Y_s^1,
Z_s^1)]ds$\vspace{5mm}\\
& $\leq 2 E\int_t^T \bar{Y}_s^+ |f^1(s, Y_s^2, Z_s^2)-f^1(s,
Y_s^1, Z_s^1)|ds \leq 2\sqrt{c}E\int_t^T \bar{Y}_s^+ [|\bar{Y}_s|+|\bar{Z}_s|]ds$ \vspace{5mm}\\
& $\leq (2\sqrt{c}+\frac{c}{1-\alpha_1})E\int_t^T
|\bar{Y}_s^+|^2ds+(1-\alpha_1)\int_t^T 1_{\{Y_s^2 >
Y_s^1\}}|\bar{Z}_s|^2ds,$
\end{tabular}
\end{equation*}
and
\begin{equation*}
\begin{tabular}{rll}
& $E\int_t^T 1_{\{Y_s^2 > Y_s^1\}} |g(s, Y_s^2, Z_s^2)-g(s, Y_s^1, Z_s^1)|^2ds$ \vspace{5mm}\\
& $\leq E\int_t^T 1_{\{Y_s^2 >
Y_s^1\}}[c|\bar{Y}_s|^2+\alpha_1|\bar{Z}_s|^2]ds$ \vspace{5mm}\\
& $\leq c E\int_t^T |\bar{Y}_s^+|^2ds +\alpha_1 \int_t^T 1_{\{Y_s^2
> Y_s^1\}}|\bar{Z}_s|^2ds.$
\end{tabular}
\end{equation*}
Then, thanks to the above inequalities, we obtain $$E|\bar{Y}_t^+|^2
\leq (c+2\sqrt{c}+\frac{c}{1-\alpha_1})E\int_t^T
|\bar{Y}_s^+|^2ds,$$ which implies
$$E|\bar{Y}_t^+|^2=0,\ \ {\rm{for\ all}}\ t\in [0, T].$$
Therefore $Y_t^1 \geq Y_t^2,\ a.s.,$ for all $t\in [0, T]$.
\end{proof}

From now on, we consider the anticipated BDSDEs
(\ref{equation:comparison j}). We give the following result. For the
proof, the reader is referred to \cite{Xu}.

\begin{proposition}
Putting $t_0=T$, we define by iteration
\begin{equation*}
t_i:=\min\{t \in [0, T]: \min\{s+ \delta(s),\ s+\zeta(s)\}\geq
t_{i-1}, {\rm{\ for\ all}}\ s\in [t, T]\},\ \ \ i\geq 1.
\end{equation*}
Set $N:=\max\{i: t_{i-1}>0\}$. Then $N$ is finite, $t_N=0$ and
\begin{equation*}
[0, T]=[0, t_{N-1}]\cup [t_{N-1}, t_{N-2}]\cup \cdots \cup [t_2,
t_1]\cup [t_1, T].
\end{equation*}
\end{proposition}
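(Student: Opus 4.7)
The plan is to exploit the fact that $\delta(\cdot)$ and $\zeta(\cdot)$ are continuous and strictly positive on the compact interval $[0,T]$, so by compactness
\[
\varepsilon := \min_{s\in[0,T]}\min\bigl\{\delta(s),\,\zeta(s)\bigr\} > 0 .
\]
This uniform positive lower bound will let us show that the recursion $t_i \mapsto t_{i+1}$ drops by at least $\varepsilon$ at each step (until we hit zero), which will then force finiteness of $N$ and give the claimed partition.

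First I would verify that each $t_i$ is in fact well-defined, i.e.\ that the set
\[
A_{i-1} := \bigl\{t\in[0,T] : \min\{s+\delta(s),\,s+\zeta(s)\}\ge t_{i-1}\ \text{for all}\ s\in[t,T]\bigr\}
\]
is nonempty and closed, so the minimum is attained. Nonemptiness is easy: since $s + \delta(s),\, s+\zeta(s) \ge s + \varepsilon$, any $t\ge t_{i-1}-\varepsilon$ (clipped to $[0,T]$) automatically lies in $A_{i-1}$; in particular $T\in A_{i-1}$ because $t_{i-1}\le T$. Closedness follows from the continuity of $h(s):=\min\{s+\delta(s),s+\zeta(s)\}$ together with a standard limit argument: if $t_n\in A_{i-1}$ and $t_n\to t$, then for any $s\in(t,T]$ eventually $s\in[t_n,T]$ so $h(s)\ge t_{i-1}$, and for $s=t$ we use $h(t)=\lim h(t_n)\ge t_{i-1}$.

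Next I would establish the decrement estimate $t_i \le \max\{0,\,t_{i-1}-\varepsilon\}$. Indeed, for every $s\in[\,\max\{0,t_{i-1}-\varepsilon\},\,T\,]$ we have $h(s)\ge s+\varepsilon \ge t_{i-1}$, so $\max\{0,t_{i-1}-\varepsilon\}\in A_{i-1}$, and $t_i = \min A_{i-1}$ is therefore bounded above by this quantity. Iterating from $t_0=T$, either the sequence reaches $0$ in finitely many steps or $t_i\le T-i\varepsilon$; in either case $t_i = 0$ for every $i \ge \lceil T/\varepsilon\rceil$. Hence $N:=\max\{i:t_{i-1}>0\}$ is finite and $t_N=0$.

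Finally, the decrement estimate also shows $t_i<t_{i-1}$ whenever $t_{i-1}>0$, so the points $0=t_N<t_{N-1}<\cdots<t_1<t_0=T$ are strictly ordered, yielding the claimed decomposition
\[
[0,T] = [0,t_{N-1}]\cup [t_{N-1},t_{N-2}]\cup\cdots\cup [t_1,T].
\]
The only mildly delicate point is the closedness of $A_{i-1}$ (ensuring the $\min$ is achieved rather than just an $\inf$); everything else is a direct consequence of the uniform positive lower bound $\varepsilon$ on $\delta$ and $\zeta$.
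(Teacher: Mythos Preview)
Your argument is correct. The paper itself does not supply a proof of this proposition; it simply refers the reader to \cite{Xu}. So there is no in-paper proof to compare against, and your self-contained argument---using compactness of $[0,T]$ and continuity of $\delta,\zeta$ to obtain a uniform lower bound $\varepsilon>0$, then showing $t_i\le\max\{0,t_{i-1}-\varepsilon\}$---is exactly the natural route and fills the gap the paper leaves. One tiny stylistic remark: your closedness argument for $A_{i-1}$ can be streamlined by noting that $t\mapsto\inf_{s\in[t,T]}h(s)$ is nondecreasing and continuous (since $h$ is), so $A_{i-1}$ is automatically a closed subinterval $[t_i,T]$; but your pointwise limit argument is fine as written.
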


\begin{proposition}\label{prop2}
For $j=1, 2$, suppose that $(Y^{j}, Z^{j})$ is the unique solution
to the anticipated BDSDE (\ref{equation:comparison j}). Then for
fixed $i\in {\{1, 2,\ldots, N\}},$ over time interval $[t_i,
t_{i-1}]$, (\ref{equation:comparison j}) is equivalent to
\begin{equation}\label{equation:in prop 2}
\left\{
\begin{tabular}{rlll}
$-d\bar{Y}_t^j$ &=& $f^j(t, \bar{Y}_t^j, \bar{Z}_t^j,
\bar{Y}_{t+\delta(t)}^j, \bar{Z}_{t+\zeta(t)}^j)dt+ g(t,
\bar{Y}_t^j, \bar{Z}_t^j) d\overleftarrow{B}_t - \bar{Z}_t^j d{W}_t,
$ & $ t\in[t_i, t_{i-1}];$ \vspace{2mm}
\\
$\bar{Y}_t^j$ &=& $Y_t^j, $ & $t\in[t_{i-1}, T+K],$  \vspace{2mm}
\\
$\bar{Z}_t^j$ &=& $Z_t^j, $ & $t\in[t_{i-1}, T+K],$
\end{tabular}\right.
\end{equation}
which is also equivalent to the following BDSDE with terminal
condition $Y_{t_{i-1}}^{j}$:
\begin{equation}\label{equation:in prop 3}
\tilde{Y}_t^{j}=Y_{t_{i-1}}^{j}+\int_t^{t_{i-1}} f^j(s,
\tilde{Y}_s^j, \tilde{Z}_s^j, Y_{s+\delta(s)}^j,
Z_{s+\zeta(s)}^j)ds+ \int_t^{t_{i-1}} g(s, \tilde{Y}_s^j,
\tilde{Z}_s^j) d\overleftarrow{B}_s - \int_t^{t_{i-1}} \tilde{Z}_s^j
d{W}_s.
\end{equation}
That is to say,
$$
Y_t^{j}=\bar{Y}_t^{j}=\tilde{Y}_t^{j},\
Z_t^{j}=\bar{Z}_t^{j}=\tilde{Z}_t^{j}=\frac{d\langle \tilde{Y}^{j},
W \rangle_t}{d t},\ t\in [t_i, {t_{i-1}}],\ j=1,2.,$$ where $\langle
\tilde{Y}^{j}, W \rangle$ is the variation process generated by
$\tilde{Y}^{j}$ and the Brownian motion $W$.
\end{proposition}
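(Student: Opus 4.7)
The plan is a backward induction on $i$ running from $1$ up to $N$, exploiting the defining property of the $t_i$'s: for every $s \in [t_i, t_{i-1}]$ one has $s+\delta(s) \geq t_{i-1}$ and $s+\zeta(s) \geq t_{i-1}$. Hence on $[t_i, t_{i-1}]$ the anticipating arguments $Y^j_{s+\delta(s)}$ and $Z^j_{s+\zeta(s)}$ depend only on the values of $(Y^j, Z^j)$ on $[t_{i-1}, T+K]$, which are already pinned down at the start of step $i$ (by the terminal data $(\xi^j, \eta^j)$ when $i=1$, by the outcome of the previous induction step otherwise).

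First I would establish the equivalence between (\ref{equation:comparison j}) restricted to $[t_i, t_{i-1}]$ and the truncated system (\ref{equation:in prop 2}). Setting $\bar Y^j := Y^j$ and $\bar Z^j := Z^j$ on $[t_i, T+K]$ yields a solution of (\ref{equation:in prop 2}) by inspection; conversely, any solution of (\ref{equation:in prop 2}) can be concatenated with the already-determined values on $[t_{i-1}, T+K]$ to produce a solution of (\ref{equation:comparison j}) on $[t_i, T+K]$, which by the uniqueness half of Theorem \ref{thm existence uniqueness} must agree with $(Y^j, Z^j)$.

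Next I would identify (\ref{equation:in prop 2}) with the non-anticipated BDSDE (\ref{equation:in prop 3}). Over $s \in [t_i, t_{i-1}]$ the processes $Y^j_{s+\delta(s)}$ and $Z^j_{s+\zeta(s)}$ are now fixed $\mathcal{G}_s$-adapted inputs, so $(s, y, z) \mapsto f^j(s, y, z, Y^j_{s+\delta(s)}, Z^j_{s+\zeta(s)})$ is a genuine driver satisfying the Lipschitz condition inherited from (H1), with terminal value $Y^j_{t_{i-1}} \in L^2(\mathcal{G}_{t_{i-1}})$. The BDSDE (\ref{equation:in prop 3}) then has a unique solution $(\tilde Y^j, \tilde Z^j)$ in $S_\mathcal{G}^2 \times L_\mathcal{G}^2$, which by uniqueness must coincide with $(\bar Y^j, \bar Z^j) = (Y^j, Z^j)$ on $[t_i, t_{i-1}]$. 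The identity $Z^j_t = d \langle \tilde Y^j, W \rangle_t / d t$ then follows by reading off the $W$-integrand from the semimartingale decomposition of $\tilde Y^j$, using that the quadratic covariation of the $g \, d\overleftarrow{B}$ term with $W$ vanishes by the independence of $B$ and $W$.

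The main obstacle is making sure that a standard Lipschitz BDSDE theory is available in the enlarged filtration $(\mathcal{G}_t)$, since the terminal values here live in $L^2(\mathcal{G}_{t_{i-1}})$ rather than in $L^2(\mathcal{F}_{t_{i-1}})$. This is handled by combining the $\mathcal{G}$-adapted existence/uniqueness for the driver-free equation (Theorem \ref{thm only t}) with the same Picard-type fixed-point argument used in the proof of Theorem \ref{thm existence uniqueness}, applied on the single subinterval $[t_i, t_{i-1}]$ with frozen anticipating arguments. A minor additional check is that the concatenated $Z^j$ lies in $L_\mathcal{G}^2(0, T+K; \mathbb{R}^d)$, which follows by summing the $L^2$ bounds over the finitely many subintervals $[t_i, t_{i-1}]$ produced by the preceding proposition.
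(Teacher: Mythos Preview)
The paper does not actually prove this proposition: immediately before stating it, the author writes ``For the proof, the reader is referred to \cite{Xu}.'' So there is no in-paper argument to compare against.

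That said, your proposal is sound and is the natural route. The decisive observation is exactly the one you isolate: by the very definition of $t_i$, for every $s\in[t_i,t_{i-1}]$ one has $s+\delta(s)\geq t_{i-1}$ and $s+\zeta(s)\geq t_{i-1}$, so the anticipating inputs on that subinterval are drawn entirely from the already-fixed data on $[t_{i-1},T+K]$. Once this is noted, (\ref{equation:in prop 2}) collapses to the ordinary BDSDE (\ref{equation:in prop 3}) with driver $(s,y,z)\mapsto f^j(s,y,z,Y^j_{s+\delta(s)},Z^j_{s+\zeta(s)})$, and uniqueness in the $\mathcal{G}$-filtration (supplied by Theorem~\ref{thm only t} plus the fixed-point argument of Theorem~\ref{thm existence uniqueness}) forces $(\tilde Y^j,\tilde Z^j)=(Y^j,Z^j)$ on $[t_i,t_{i-1}]$. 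Your derivation of $Z^j_t=d\langle\tilde Y^j,W\rangle_t/dt$ from the semimartingale decomposition, using the independence of $B$ and $W$, is also correct.

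One small simplification: since the proposition takes the global solution $(Y^j,Z^j)$ as \emph{given}, you do not actually need an induction to ``pin down'' the values on $[t_{i-1},T+K]$; for each fixed $i$ those values are simply the restriction of the known solution, and the three equivalences can be established for each $i$ independently. The inductive structure you describe is really how the proposition is \emph{used} downstream in the proof of Theorem~\ref{thm comparison}, not something required for the proposition itself. This does not affect correctness, only economy.
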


The main result of this part is

\begin{theorem}\label{thm comparison}
Let $(Y^{j}, Z^{j})\in S_\mathcal{G}^2(0, T+K; \mathbb{R})\times
L_\mathcal{G}^2(0, T+K; \mathbb{R}^d)$ $(j=1, 2)$ be the unique
solutions to anticipated BDSDEs (\ref{equation:comparison j})
respectively. If
\item{(i)} $\xi_s^{1}\geq \xi_s^{2}, s\in [T, T+K], a.e., a.s.;$
\item{(ii)} for all $t\in [0, T]$, $(y, z)\in
\mathbb{R}\times \mathbb{R}^d,$ $\theta^{j}\in S_\mathcal{G}^2(t,
T+K; \mathbb{R})$ $(j=1, 2)$ such that $\theta^{1} \geq \theta^{2},$
$\{\theta_{r}^{j}\}_{r\in[t, T]}$ is a continuous semimartingale and
$(\theta_{r}^{j})_{r\in [T, T+K]}=(\xi_r^{j})_{r\in [T, T+K]}$,
\begin{equation}\label{equation:condition 1}
f^1(t, y, z, \theta_{t+\delta(t)}^{1}, \eta_{t+\zeta(t)}^{1}) \geq
f^2(t, y, z, \theta_{t+\delta(t)}^{2}, \eta_{t+\zeta(t)}^{2}),\ \
a.e., a.s.,
\end{equation}
\begin{equation}\label{equation:condition 2}
f^1(t, y, z, \theta_{t+\delta(t)}^{1}, \frac{d\langle \theta^{1},
W\rangle_r}{d r}|_{r=t+\zeta(t)}) \geq f^2(t, y, z,
\theta_{t+\delta(t)}^{2}, \frac{d\langle \theta^{2}, W\rangle_r}{d
r}|_{r=t+\zeta(t)}),\ \ a.e., a.s.,
\end{equation}
\begin{equation}\label{equation:condition 3}
f^1(t, y, z, \xi_{t+\delta(t)}^{1}, \frac{d\langle \theta^{1},
W\rangle_r}{d r}|_{r=t+\zeta(t)}) \geq f^2(t, y, z,
\xi_{t+\delta(t)}^{2}, \frac{d\langle \theta^{2}, W\rangle_r}{d
r}|_{r=t+\zeta(t)}),\ \ a.e., a.s.,
\end{equation}
then $Y_t^{1} \geq Y_t^{2}, a.e., a.s..$
\end{theorem}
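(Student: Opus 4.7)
The plan is to prove the comparison by backward induction along the deterministic partition $[0,T]=[0,t_{N-1}]\cup\cdots\cup[t_1,T]$ produced by the preceding proposition. On each subinterval $[t_i,t_{i-1}]$, Proposition \ref{prop2} rewrites the anticipated BDSDE (\ref{equation:comparison j}) as the ordinary BDSDE (\ref{equation:in prop 3}) with terminal value $Y_{t_{i-1}}^j$ and driver
\begin{equation*}
\hat f^j(s,y,z):=f^j(s,y,z,Y_{s+\delta(s)}^j,Z_{s+\zeta(s)}^j),
\end{equation*}
where, by the choice of $t_i$, the anticipated arguments are frozen: they come either from intervals already handled in the induction or from the prescribed terminal data $(\xi,\eta)$. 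The objective at each step is therefore to verify the two hypotheses of the non-anticipating comparison result (Theorem \ref{thm comparison only t}) for this ordinary BDSDE, namely the ordering of the terminal and the pointwise ordering of the drivers.

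For the base case $i=1$ on $[t_1,T]$, the definition of $t_1$ forces $s+\delta(s)\geq T$ and $s+\zeta(s)\geq T$, so $Y_{s+\delta(s)}^j=\xi_{s+\delta(s)}^j$ and $Z_{s+\zeta(s)}^j=\eta_{s+\zeta(s)}^j$. The terminal order $\xi_T^1\geq\xi_T^2$ is supplied by (i). Applying condition (\ref{equation:condition 1}) to any continuous semimartingale extensions $\theta^j$ of $\xi^j$ with $\theta^1\geq\theta^2$ (e.g.\ the constant $\xi_T^j$ on $[0,T]$ spliced with $\xi^j$ on $[T,T+K]$) yields $\hat f^1\geq\hat f^2$ pathwise. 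Theorem \ref{thm comparison only t} then gives $Y^1\geq Y^2$ on $[t_1,T]$; together with (i) the comparison extends to $[t_1,T+K]$.

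For the inductive step, assume the comparison holds on $[t_{i-1},T+K]$ and fix $s\in[t_i,t_{i-1}]$. Since $s+\delta(s)\geq t_{i-1}$, the induction hypothesis gives both $Y_{t_{i-1}}^1\geq Y_{t_{i-1}}^2$ and $Y_{s+\delta(s)}^1\geq Y_{s+\delta(s)}^2$. Take $\theta^j$ equal to $Y^j$ on $[t_{i-1},T+K]$ and extended constantly to $[s,t_{i-1})$ by $Y_{t_{i-1}}^j$; this is a continuous semimartingale on $[s,T]$, agrees with $\xi^j$ on $[T,T+K]$, and satisfies $\theta^1\geq\theta^2$. The driver comparison $\hat f^1(s,y,z)\geq\hat f^2(s,y,z)$ then follows from a case split on the location of $s+\zeta(s)$. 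If $s+\zeta(s)\in[T,T+K]$, then $Z_{s+\zeta(s)}^j=\eta_{s+\zeta(s)}^j$ and (\ref{equation:condition 1}) applies directly. If $s+\zeta(s)\in[t_{i-1},T]$, Proposition \ref{prop2} identifies $Z_{s+\zeta(s)}^j$ with $d\langle\theta^j,W\rangle_r/dr\big|_{r=s+\zeta(s)}$, and one invokes (\ref{equation:condition 2}) when $s+\delta(s)<T$ (so $\theta^j_{s+\delta(s)}=Y^j_{s+\delta(s)}$) and (\ref{equation:condition 3}) when $s+\delta(s)\geq T$ (so $\theta^j_{s+\delta(s)}=\xi^j_{s+\delta(s)}$). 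In every case $\hat f^1\geq\hat f^2$, so Theorem \ref{thm comparison only t} completes the step.

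The main technical point the architecture circumvents is that no comparison principle is available for the $Z$-components, so a direct monotonicity hypothesis on $f$ in the $Z$-anticipating argument would be vacuous. The three assumptions (\ref{equation:condition 1})--(\ref{equation:condition 3}) are tailored precisely so that, whichever explicit form $Z_{s+\zeta(s)}^j$ takes at a given stage of the backward induction -- the given $\eta$, or the bracket density of the solution $Y^j$ itself -- a pathwise inequality between $f^1$ and $f^2$ is already supplied. The only real bookkeeping is checking that (\ref{equation:condition 1})--(\ref{equation:condition 3}) exhaust every combination of locations of $s+\delta(s)$ and $s+\zeta(s)$ that can arise; this is the genuine content of the theorem.
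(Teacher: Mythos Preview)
Your proposal is correct and follows essentially the same backward-induction strategy as the paper: partition $[0,T]$ via the preceding proposition, on each $[t_i,t_{i-1}]$ reduce the anticipated BDSDE to the ordinary BDSDE of Proposition~\ref{prop2}, verify the driver ordering, and invoke Theorem~\ref{thm comparison only t}. Your write-up is in fact more explicit than the paper's at the inductive step: the paper simply says ``according to (ii)'' to obtain $f^1(s,y,z,Y^1_{s+\delta(s)},Z^1_{s+\zeta(s)})\geq f^2(s,y,z,Y^2_{s+\delta(s)},Z^2_{s+\zeta(s)})$, whereas you spell out the case split on the location of $s+\zeta(s)$ (in $[T,T+K]$ versus $[t_{i-1},T]$) and of $s+\delta(s)$, and match each case to the relevant condition among (\ref{equation:condition 1})--(\ref{equation:condition 3}).

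One small caveat on your concrete choice of $\theta^j$. The constant extension you propose---setting $\theta^j_r\equiv\xi_T^j$ on $[0,T]$ in the base case, or $\theta^j_r\equiv Y^j_{t_{i-1}}$ on $[s,t_{i-1})$ in the inductive step---is generally \emph{not} $\mathcal{G}_r$-progressively measurable for $r$ strictly below $T$ (resp.\ $t_{i-1}$), since $\mathcal{G}_r=\mathcal{F}_{0,r}^W\vee\mathcal{F}_{r,T+K}^B$ need not contain the $\mathcal{F}^W_{0,T}$- (resp.\ $\mathcal{F}^W_{0,t_{i-1}}$-) information carried by $\xi_T^j$ or $Y^j_{t_{i-1}}$. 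This does not affect the values actually substituted into (\ref{equation:condition 1})--(\ref{equation:condition 3}), because those conditions only sample $\theta^j$ and $d\langle\theta^j,W\rangle/dr$ at times $\geq t_{i-1}$; but strictly speaking your exhibited $\theta^j$ may fall outside the admissible class $S^2_{\mathcal{G}}(s,T+K;\mathbb{R})$ required by hypothesis~(ii). The paper's own proof is silent on this point as well, so this is a shared level of informality rather than a defect of your approach.
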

\begin{proof}
Consider the anticipated BDSDE (\ref{equation:comparison j}) one
time interval by one time interval. For the first step, we consider
the case when $t\in [t_1, T]$. According to Proposition \ref{prop2},
we can equivalently consider
\begin{equation*}
\tilde{Y}_t^{j}=\xi_T^{j}+\int_t^T f^j(s, \tilde{Y}_s^j,
\tilde{Z}_s^j, \xi_{s+\delta(s)}^j, \eta_{s+\zeta(s)}^j)ds+ \int_t^T
g(s, \tilde{Y}_s^j, \tilde{Z}_s^j) d\overleftarrow{B}_s - \int_t^T
\tilde{Z}_s^j d{W}_s,
\end{equation*}
from which we have
\begin{equation}\label{equation:Z t1 T}
{Z}_t^{j}=\tilde{Z}_t^{j}=\frac{d\langle \tilde{Y}^{j},
W\rangle_t}{d t},\ \ t\in [t_1, T].
\end{equation}
Noticing that $\xi^{j}\in S_\mathcal{G}^2(T, T+K; \mathbb{R})$
$(j=1, 2)$ and $\xi^{1}\geq \xi^{2},$ from (\ref{equation:condition
1}) in (ii), we can get, for $s\in [t_1, T],$ $y\in \mathbb{R}$,
$z\in \mathbb{R}^d,$
\begin{center}
$f^1(s, y, z, \xi_{s+\delta(s)}^{1}, \eta_{s+\zeta(s)}^{1}) \geq
f^2(s, y, z, \xi_{s+\delta(s)}^{2}, \eta_{s+\zeta(s)}^{2}). $
\end{center}
According to Theorem \ref{thm comparison only t}, we can get
\begin{center}
$ \tilde{Y}_t^{1}\geq \tilde{Y}_t^{2},\ \ t\in[t_1, T],\ \
a.e.,a.s., $
\end{center}
which implies
\begin{equation}\label{equation:Y t1 T}
Y_t^{(1)}\geq Y_t^{(2)},\ \ t\in[t_1, T+K],\ \ a.e.,a.s..
\end{equation}

For the second step, we consider the case when $t\in [t_2, t_1]$.
Similarly, according to Proposition \ref{prop2}, we can consider the
following BSDE equivalently:
\begin{equation*}
\tilde{\tilde{Y}}_t^{j}=Y_{t_1}^{j}+\int_t^{t_1} f^j(s,
\tilde{\tilde{Y}}_s^j, \tilde{\tilde{Z}}_s^j, Y_{s+\delta(s)}^j,
Z_{s+\zeta(s)}^j)ds+ \int_t^{t_1} g(s, \tilde{\tilde{Y}}_s^j,
\tilde{\tilde{Z}}_s^j) d\overleftarrow{B}_s - \int_t^{t_1}
\tilde{\tilde{Z}}_s^j d{W}_s,
\end{equation*}
from which we have ${Z}_t^{j}=\tilde{\tilde{Z}}_t^{j}=\frac{d\langle
\tilde{\tilde{Y}}^{j}, W \rangle_t}{d t}$ for $t\in [t_2, t_1]$.
Noticing (\ref{equation:Z t1 T}) and (\ref{equation:Y t1 T}),
according to (ii), we have, for $s\in [t_2, t_1]$, $y\in
\mathbb{R}$, $z\in \mathbb{R}^d,$
\begin{center}
$f^1(s, y, z, Y_{s+\delta(s)}^{1}, Z_{s+\zeta(s)}^{1}) \geq f^2(s,
y, z, Y_{s+\delta(s)}^{2}, Z_{s+\zeta(s)}^{2}). $
\end{center}
Applying Theorem \ref{thm comparison only t} again, we can finally
get
\begin{center}
$ Y_t^{1}\geq Y_t^{2},\ \ t\in[t_2, t_1],\ \ a.e.,a.s.. $
\end{center}

Similarly to the above steps, we can give the proofs for the other
cases when $t\in [t_3, t_2],$ $[t_4, t_3],$ $\cdots,$ $[t_N,
t_{N-1}].$
\end{proof}

\begin{example}
Now suppose that we are facing with the following two ABDSDEs:
\begin{equation*}
\left\{
\begin{tabular}{rlll}
$-dY_t^1$ &=& $E^{\mathcal{F}_t}[Y_{t+\delta(t)}^{1}+\sin
(2Y_{t+\delta(t)}^{1})+|Z_{t+\zeta(t)}^{1}|+2]dt$ \vspace{2mm}
\\
&& $ + [Y_{t}^{1}+\frac{1}{\sqrt{3}}|Z_{t}^{1}|]
d\overleftarrow{B}_t - Z_t^1 d{W}_t, $ & $ t\in[0, T];$ \vspace{2mm}
\\
$Y_t^1$ &=& $\xi_t^1, $ & $t\in[T, T+K],$ \vspace{2mm}
\\
$Z_t^1$ &=& $\eta_t^1, $ & $t\in[T, T+K],$
\end{tabular}\right.
\end{equation*}
\begin{equation*}
\left\{
\begin{tabular}{rlll}
$-dY_t^2$ &=& $E^{\mathcal{F}_t}[Y_{t+\delta(t)}^{2}+2 |\cos
Y_{t+\delta(t)}^{2}|+\sin Z_{t+\zeta(t)}^{2}-2]dt$ \vspace{2mm}
\\
&& $ + [Y_{t}^{2}+\frac{1}{\sqrt{3}}|Z_{t}^{2}|]
d\overleftarrow{B}_t - Z_t^2 d{W}_t, $ & $ t\in[0, T];$ \vspace{2mm}
\\
$Y_t^2$ &=& $\xi_t^2, $ & $t\in[T, T+K],$ \vspace{2mm}
\\
$Z_t^2$ &=& $\eta_t^2, $ & $t\in[T, T+K],$
\end{tabular}\right.
\end{equation*}
where $\xi_t^{(1)}\geq \xi_t^{(2)}, t\in [T, T+K].$

It is obvious that
\begin{center}
$x+\sin (2x)+|u|+2 \geq y+2 |\cos y| + \sin v-2,\ for\ all\ x\geq y,
x, y\in \mathbb{R}, u, v \in \mathbb{R}^d,$
\end{center}
which implies (\ref{equation:condition 1})-(\ref{equation:condition
3}), then according to Theorem \ref{thm comparison}, we get
$Y_t^{1}\geq Y_t^{2},\ a.e.,\ a.s..$
\end{example}

\begin{remark}
By the same way, for the case when $\delta=\zeta$,
(\ref{equation:condition 1})-(\ref{equation:condition 3}) can be
replaced by (\ref{equation:condition 2}) together with
\begin{center}
$ f^1(t, y, z, \xi_{t+\delta(t)}^{1}, \eta_{t+\zeta(t)}^{1}) \geq
f^2(t, y, z, \xi_{t+\delta(t)}^{2}, \eta_{t+\zeta(t)}^{2}),\ \ a.e.,
a.s.. $
\end{center}
\end{remark}

For a special case when $f^1$ and $f^2$ are independent of the
anticipated term $Z$, we easily get the following comparison result.

\begin{theorem}\label{thm comparison no Z}
Let $(Y^{j}, Z^{j})\in S_\mathcal{G}^2(0, T+K; \mathbb{R})\times
L_\mathcal{G}^2(0, T+K; \mathbb{R}^d)$ $(j=1, 2)$ be the unique
solutions to the following ABDSDEs respectively:
\begin{equation*}
\left\{
\begin{tabular}{rlll}
$-dY_t^j$ &=& $f^j(t, Y_t^j, Z_t^j, Y_{t+\delta(t)}^j)dt+ g(t,
Y_t^j, Z_t^j) d\overleftarrow{B}_t - Z_t^j d{W}_t, $ & $ t\in[0,
T];$ \vspace{2mm}
\\
$Y_t^j$ &=& $\xi_t^j, $ & $t\in[T, T+K].$
\end{tabular}\right.
\end{equation*}
If
\item{(i)} $\xi_s^{1}\geq \xi_s^{2}, s\in [T, T+K], a.e., a.s.;$
\item{(ii)} for all $t\in [0, T]$, $(y, z)\in
\mathbb{R}\times \mathbb{R}^d,$ $\theta^{j}\in S_\mathcal{G}^2(t,
T+K; \mathbb{R})$ $(j=1, 2)$ such that $\theta^{1} \geq \theta^{2}$
and $(\theta_{r}^{j})_{r\in [T, T+K]}=(\xi_r^{j})_{r\in [T, T+K]}$,
\begin{equation}\label{equation:no z}
f^1(t, y, z, \theta_{t+\delta(t)}^{1}) \geq f^2(t, y, z,
\theta_{t+\delta(t)}^{2}),\ \ a.e., a.s.,
\end{equation}
then $Y_t^{1} \geq Y_t^{2}, a.e., a.s..$
\end{theorem}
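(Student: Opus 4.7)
The plan is to mirror the inductive structure used in the proof of Theorem \ref{thm comparison}, working backwards along the partition $t_0 = T > t_1 > \cdots > t_N = 0$ supplied by Proposition \ref{prop2} (where we now only need to control $s+\delta(s)$, since no anticipated $Z$-argument occurs). On each subinterval $[t_i,t_{i-1}]$ I reduce the ABDSDE to a standard BDSDE with the anticipated $Y$-argument frozen as a known process, and then invoke the classical one-interval comparison result, Theorem \ref{thm comparison only t}.

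\textbf{Base step} (on $[t_1,T]$): for $s\in[t_1,T]$ one has $s+\delta(s)\geq T$, so $Y^j_{s+\delta(s)}=\xi^j_{s+\delta(s)}$, and each ABDSDE reduces on this interval to a standard BDSDE with terminal $\xi^j_T$ and driver $f^j(s,\cdot,\cdot,\xi^j_{s+\delta(s)})$. Taking in hypothesis (ii) the test processes $\theta^j$ to be any $S_\mathcal{G}^2$-extensions of $\xi^j$ satisfying $\theta^1\geq\theta^2$ (for instance $\theta^j_r:=\xi^j_{T\vee r}$), condition (\ref{equation:no z}) gives $f^1(s,y,z,\xi^1_{s+\delta(s)})\geq f^2(s,y,z,\xi^2_{s+\delta(s)})$ a.s., a.e.\ for $(s,y,z)\in[t_1,T]\times\mathbb{R}\times\mathbb{R}^d$. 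Since $\xi^1_T\geq\xi^2_T$, Theorem \ref{thm comparison only t} yields $Y^1_t\geq Y^2_t$ on $[t_1,T]$, which combined with (i) produces the ordering on all of $[t_1,T+K]$.

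\textbf{Inductive step}: suppose $Y^1_t\geq Y^2_t$ a.s.\ on $[t_{i-1},T+K]$. By the analogue of Proposition \ref{prop2} for the present setting, the ABDSDE restricted to $[t_i,t_{i-1}]$ is equivalent to the BDSDE
\begin{equation*}
\tilde Y^j_t = Y^j_{t_{i-1}} + \int_t^{t_{i-1}} f^j(s,\tilde Y^j_s,\tilde Z^j_s, Y^j_{s+\delta(s)})\,ds + \int_t^{t_{i-1}} g(s,\tilde Y^j_s,\tilde Z^j_s)\,d\overleftarrow{B}_s - \int_t^{t_{i-1}} \tilde Z^j_s\,dW_s,
\end{equation*}
whose anticipated argument $Y^j_{s+\delta(s)}$ is now \emph{known} from the previous steps. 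Viewing $\theta^j := Y^j\big|_{[t_i,T+K]}$ as the test processes in (ii) — they lie in $S_\mathcal{G}^2(t_i,T+K;\mathbb{R})$, match $\xi^j$ on $[T,T+K]$, and satisfy $\theta^1\geq\theta^2$ by the inductive hypothesis — condition (\ref{equation:no z}) delivers $f^1(s,y,z,Y^1_{s+\delta(s)})\geq f^2(s,y,z,Y^2_{s+\delta(s)})$ for $s\in[t_i,t_{i-1}]$. Combined with $Y^1_{t_{i-1}}\geq Y^2_{t_{i-1}}$, a second application of Theorem \ref{thm comparison only t} yields $Y^1_t\geq Y^2_t$ on $[t_i,t_{i-1}]$. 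After $N$ such steps the ordering holds on all of $[0,T+K]$.

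\textbf{Main obstacle}: there are essentially no analytic difficulties — the Itô-calculus work is already absorbed into Theorem \ref{thm comparison only t}. The only points requiring care are bookkeeping: (a) verifying that Proposition \ref{prop2}'s equivalence still applies when only $\delta$ (not $\zeta$) drives the partition, and (b) choosing legitimate test processes $\theta^j$ at each stage so that hypothesis (ii) is genuinely applicable. Crucially, because $f^j$ has no anticipated $Z$-argument, none of the continuous-semimartingale or quadratic-variation complications of Theorem \ref{thm comparison} arise, which is exactly why the single monotonicity condition (\ref{equation:no z}) replaces the three conditions (\ref{equation:condition 1})--(\ref{equation:condition 3}).
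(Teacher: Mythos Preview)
Your proposal is correct and follows exactly the route the paper intends: the paper offers no separate proof of this theorem, stating only that it is ``easily'' obtained as the special case of Theorem~\ref{thm comparison} in which the anticipated $Z$-argument is absent, and your backward induction along the partition of Proposition~\ref{prop2} together with repeated appeals to Theorem~\ref{thm comparison only t} is precisely that argument. One small bookkeeping correction to the point you yourself flagged: in the inductive step the choice $\theta^j:=Y^j|_{[t_i,T+K]}$ is only known to satisfy $\theta^1\geq\theta^2$ on $[t_{i-1},T+K]$ by the inductive hypothesis, not on all of $[t_i,T+K]$; since $s+\delta(s)\geq t_{i-1}$ for $s\in[t_i,t_{i-1}]$, you should instead take any ordered $S_\mathcal{G}^2$-extension of $Y^j|_{[t_{i-1},T+K]}$ to $[s,T+K]$ (e.g.\ constant equal to $Y^j_{t_{i-1}}$ on $[s,t_{i-1}]$), which still evaluates to $Y^j_{s+\delta(s)}$ at the relevant point.
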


\begin{remark}
The coefficients $f^1$ and $f^2$ will satisfy (\ref{equation:no z}),
if for any $(t, y, z)\in [0, T]\times \mathbb{R}\times
\mathbb{R}^d,$ $\theta\in L_\mathcal{G}^2(t, T+K; \mathbb{R}),$
$r\in [t, T+K],$ $f^1(t, y, z, \theta_r)\geq f^2(t, y, z,
\theta_r),$ together with one of the following:
\item{(i)} for
any $(t, y, z)\in [0, T]\times \mathbb{R}\times \mathbb{R}^d,$
$f^1(t, y, z, \cdot)$ is increasing, i.e., $f^1(t, y, z,
\theta_r)\geq f^1(t, y, z, \theta_r^\prime),$ if $\theta \geq
\theta^\prime,$ $\theta, \theta^\prime\in L_\mathcal{G}^2(t, T+K;
\mathbb{R}),$ $r\in [t, T+K];$
\item{(ii)} for any $(t, y, z)\in [0, T]\times \mathbb{R}\times \mathbb{R}^d,$
$f^2(t, y, z, \cdot)$ is increasing, i.e., $f^2(t, y, z,
\theta_r)\geq f^2(t, y, z, \theta_r^\prime),$ if $\theta \geq
\theta^\prime,$ $\theta, \theta^\prime\in L_\mathcal{G}^2(t, T+K;
\mathbb{R}),$ $r\in [t, T+K].$
\end{remark}

\begin{remark}\label{remark}
The coefficients $f^1$ and $f^2$ will satisfy (\ref{equation:no z}),
if
\begin{center}
$ f^1(t, y, z, \theta_r)\geq \tilde{f}(t, y, z, \theta_r)\geq f^2(t,
y, z, \theta_r), $
\end{center}
for any $(t, y, z)\in [0, T]\times \mathbb{R}\times \mathbb{R}^d,$
$\theta \in L_\mathcal{G}^2(t, T+K; \mathbb{R}), r\in [t, T+K].$
Here the function $\tilde{f}(t, y, z, \cdot)$ is increasing, for any
$(t, y, z)\in [0, T]\times \mathbb{R}\times \mathbb{R}^d,$
  i.e., $\tilde{f}(t, y, z, \theta_r)\geq \tilde{f}(t, y, z,
\theta_r^\prime),$ if $\theta_r\geq \theta_r^\prime,$ $\theta,
\theta^\prime\in L_\mathcal{G}^2(t, T+K; \mathbb{R}), r\in [t,
T+K].$
\end{remark}

\begin{example} The following three functions satisfy
the conditions in Remark \ref{remark}: $f^1(t, y, z,
\theta_r)=E^{\mathcal{F}_t}[\theta_r- \sin (2\theta_r) + 2]$,
$\tilde{f}(t, y, z, \theta_r)=E^{\mathcal{F}_t}[\theta_r+\cos
\theta_r]$, $f^2(t, y, z, \theta_r)=E^{\mathcal{F}_t}[\theta_r+2\cos
\theta_r-1].$
\end{example}

\section{A duality result between delayed DSDEs and anticipated BDSDEs}

In this part we will establish a duality between the following
anticipated BDSDE
\begin{equation}\label{eq duality bdsde}
\left\{
\begin{tabular}{rlll}
$-dY_t$ &=& $([\mu_t+\kappa_t^2] Y_t+ \bar{\mu}_t E^{\mathcal{F}_{t,
T}^B}[Y_{t+\delta}] + \sigma_t Z_t+ \bar{\sigma}_t
E^{\mathcal{F}_{t, T}^B}[Z_{t+\delta}] + \rho_t) dt$ & \vspace{2mm}
\\ && $+ \kappa_t Y_t d \overleftarrow{B}_t - Z_t d{W}_t, $
& $ t\in[t_0, T];$ \vspace{2mm}
\\
$Y_t$ &=& $\xi_t, $ & $t\in[T, T+\delta];$ \vspace{2mm}
\\
$Z_t$ &=& $\eta_t, $ & $t\in[T, T+\delta]$
\end{tabular}\right.
\end{equation}
and the delayed DSDE
\begin{equation}\label{eq duality sde}
\left\{
\begin{tabular}{rlll}
$dX_s$ &=& $(\mu_s X_s +\bar{\mu}_{s-\delta}  X_{s-\delta})ds +
\kappa_s X_s d\overleftarrow{B}_s + (\sigma_s X_s
+\bar{\sigma}_{s-\delta} X_{s-\delta}) d{W}_s, $ & $ s\in[t, T];$
\vspace{2mm}
\\
$X_t$ &=& $1, $ & $$  \vspace{2mm}
\\
$X_s$ &=& $0, $ & $s\in[t-\delta, t),$
\end{tabular}\right.
\end{equation}
where we suppose that $t_0 \geq \delta >0$ are fixed constants,
$(\xi, \eta) \in S_{\mathcal{G}}^2(T, T+\delta; \mathbb{R})\times
L_{\mathcal{G}}^2(T, T+\delta; \mathbb{R}^d)$ with $\xi_T\in
L^2({\mathcal{F}_{T, T}^B}; \mathbb{R})$, $\mu_t$, $\bar{\mu}_t\in
L_{\mathcal{F}_{t, T}^B}^2(t_0-\delta, T+\delta; \mathbb{R})$,
$\sigma_t, \bar{\sigma}_t \in L_{\mathcal{F}_{t, T}^B}^2(t_0-\delta,
T+\delta; \mathbb{R}^d)$, $\kappa_t \in L_{\mathcal{F}_{t,
T}^B}^2(t_0, T; \mathbb{R}^l)$, $\rho_t \in L_{\mathcal{F}_{t,
T}^B}^2(t_0, T; \mathbb{R})$, and $\mu$, $\bar{\mu}$,
$\sigma$,$\bar{\sigma}$, $\kappa$ are uniformly bounded. Then by
Theorem \ref{thm existence uniqueness}, (\ref{eq duality bdsde}) has
a unique solution.

\begin{proposition}\label{prop duality}
Let $(Y, Z) \in S_\mathcal{G}^2(t_0, T+\delta; \mathbb{R})\times
L_\mathcal{G}^2(t_0, T+\delta; \mathbb{R}^d)$ be the unique solution
of ABDSDE (\ref{eq duality bdsde}). Then for $t\in[t_0, T]$,
$Z_t\equiv 0$, and $Y_t$ is $\mathcal{F}_{t, T}^B$-progressively
measurable.
\end{proposition}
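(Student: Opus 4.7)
The plan is to exploit uniqueness: I construct, by hand, a candidate solution $(\tilde Y, \tilde Z)$ of ABDSDE (\ref{eq duality bdsde}) in the space $S_\mathcal{G}^2(t_0, T+\delta;\mathbb{R})\times L_\mathcal{G}^2(t_0, T+\delta;\mathbb{R}^d)$ with $\tilde Z_t\equiv 0$ and $\tilde Y_t$ being $\mathcal{F}_{t, T}^B$-progressively measurable on $[t_0, T]$, and then Theorem \ref{thm existence uniqueness} forces $(Y, Z) = (\tilde Y, 0)$. The crucial observation that makes the ansatz $Z\equiv 0$ plausible is that every explicit coefficient ($\mu_t, \bar\mu_t, \sigma_t, \bar\sigma_t, \kappa_t, \rho_t$) is $\mathcal{F}_{t, T}^B$-adapted, the terminal $\xi_T$ is $\mathcal{F}_{T, T}^B$-measurable, and the anticipated $Z$-term appears inside $E^{\mathcal{F}_{t,T}^B}[\,\cdot\,]$, which projects onto the backward Brownian filtration. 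Hence no $W$-randomness can enter $Y$ through the drift or the backward stochastic integral on $[t_0,T]$.

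I would carry out the construction backward in time, on the finitely many intervals $I_k := [\max(t_0, T-k\delta), T-(k-1)\delta]$, $k=1,2,\ldots,n$, with $n:=\lceil (T-t_0)/\delta\rceil$. On $I_1 = [T-\delta, T]$, for $t\in I_1$ we have $t+\delta\in[T, T+\delta]$, so $Y_{t+\delta}=\xi_{t+\delta}$ and $Z_{t+\delta}=\eta_{t+\delta}$ are prescribed data; setting $\tilde Z_t\equiv 0$ reduces the BDSDE to the linear backward SDE
\begin{equation*}
\tilde Y_t=\xi_T+\int_t^T\!\bigl\{[\mu_s+\kappa_s^2]\tilde Y_s+\bar\mu_s E^{\mathcal{F}_{s,T}^B}[\xi_{s+\delta}]+\bar\sigma_s E^{\mathcal{F}_{s,T}^B}[\eta_{s+\delta}]+\rho_s\bigr\}\,ds+\int_t^T\kappa_s\tilde Y_s\,d\overleftarrow{B}_s,
\end{equation*}
whose coefficients and inhomogeneous term are now $\mathcal{F}_{s,T}^B$-adapted. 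On $I_2=[T-2\delta,T-\delta]$, the previously constructed values give $\tilde Y_{t+\delta}$ that is $\mathcal{F}_{t+\delta,T}^B$-measurable and $\tilde Z_{t+\delta}=0$, so the conditional expectations $E^{\mathcal{F}_{t,T}^B}[\tilde Y_{t+\delta}]$ and $E^{\mathcal{F}_{t,T}^B}[\tilde Z_{t+\delta}]=0$ are again $\mathcal{F}_{t,T}^B$-adapted, and the same reduction applies with terminal $\tilde Y_{T-\delta}$. Iterating this $n$ times covers $[t_0, T]$ and produces the desired $(\tilde Y, 0)$, which by inspection lies in the correct solution space.

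The main technical obstacle is the well-posedness, on each $I_k$, of the linear backward SDE driven by the backward Brownian motion $B$ alone with $\mathcal{F}_{s,T}^B$-adapted data and terminal, yielding an $\mathcal{F}_{t,T}^B$-progressively measurable solution. I would handle this by time reversal: setting $\hat Y_u := \tilde Y_{T-u}$ and $\hat B_u := B_T - B_{T-u}$, one checks that $\hat B$ is a standard Brownian motion in the filtration $(\mathcal{F}_{T-u,T}^B)$ and the equation becomes a standard forward linear SDE in $\hat Y$ with uniformly bounded drift/diffusion coefficients, whose unique strong solution is adapted to $(\mathcal{F}_{T-u,T}^B)$; reversing time back gives the required $\mathcal{F}_{t,T}^B$-adapted solution on $I_k$. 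Once this linear well-posedness is granted on each $I_k$, pasting the pieces together yields a global $(\tilde Y, 0)$ that satisfies (\ref{eq duality bdsde}) on $[t_0, T]$ with the prescribed boundary data on $[T, T+\delta]$, and uniqueness from Theorem \ref{thm existence uniqueness} concludes the proof.
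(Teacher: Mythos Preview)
Your argument is correct, but the paper takes a different and somewhat shorter route. Rather than constructing a candidate solution piecewise via time reversal, the paper introduces an \emph{auxiliary} ABDSDE in which every coefficient (not only the anticipated terms) is wrapped in $E^{\mathcal{F}_{t,T}^B}[\,\cdot\,]$, and solves it directly in $S_\mathcal{G}^2\times L_\mathcal{G}^2$ by invoking Theorem~\ref{thm existence uniqueness}. Because the drift and backward integrand are now, by construction, $\mathcal{F}_{s,T}^B$-adapted regardless of the measurability of $(Y',Z')$, the quantity $\Theta'$ in the representation $Y_t'=E^{\mathcal{G}_t}[\Theta']$ (from the proof of Theorem~\ref{thm only t}) is $\mathcal{F}_{t,T}^B$-measurable; an independence argument then gives $Y_t'=E^{\mathcal{F}_{t,T}^B}[\Theta']$, hence $Z'\equiv 0$. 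At that point $E^{\mathcal{F}_{t,T}^B}[Y_t']=Y_t'$ and $E^{\mathcal{F}_{t,T}^B}[Z_t']=Z_t'$, so the auxiliary equation collapses to the original one and uniqueness finishes the job.

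The trade-off: the paper's approach avoids both the interval-by-interval induction and the separate well-posedness analysis (time reversal) for the backward linear $B$-SDE, recycling instead the machinery already built in Section~3. Your construction is more hands-on and makes the $\mathcal{F}_{t,T}^B$-adaptedness of $\tilde Y$ transparent at each step, at the cost of proving an extra lemma (existence for the pure-$B$ backward linear SDE) that the paper sidesteps by staying inside the ABDSDE framework.
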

\begin{proof}
First we show that $Y_t$ is $\mathcal{F}_{t, T}^B$-progressively
measurable. For this we introduce the following auxiliary equation:
\begin{equation}\label{eq duality bdsde auxiliary}
\left\{
\begin{tabular}{rlll}
$-dY_t^\prime$ &=& $E^{\mathcal{F}_{t, T}^B}[(\mu_t+\kappa_t^2)
Y_t^\prime+\bar{\mu}_t Y_{t+\delta}^\prime + \sigma_t Z_t^\prime +
\bar{\sigma}_t Z_{t+\delta}^\prime + \rho_t] dt$ & \vspace{2mm}
\\ && $+ \kappa_t E^{\mathcal{F}_{t, T}^B} [Y_t^\prime] d \overleftarrow{B}_t - Z_t^\prime d{W}_t, $
& $ t\in[t_0, T];$ \vspace{2mm}
\\
$Y_t^\prime$ &=& $\xi_t, $ & $t\in[T, T+\delta];$ \vspace{2mm}
\\
$Z_t^\prime$ &=& $\eta_t, $ & $t\in[T, T+\delta]$
\end{tabular}\right.
\end{equation}
which has a unique solution according to Theorem \ref{thm existence
uniqueness}.

In fact, it is obvious that
$$Y_t^\prime = E^{\mathcal{G}_t}[\Theta^\prime],$$ where $$\Theta^\prime:=\xi_T + \int_t^T
E^{\mathcal{F}_{s, T}^B}[(\mu_s+\kappa_s^2) Y_s^\prime+\bar{\mu}_s
Y_{s+\delta}^\prime + \sigma_s Z_s^\prime + \bar{\sigma}_s
Z_{s+\delta}^\prime + \rho_s] ds +\int_t^T \kappa_s
E^{\mathcal{F}_{s, T}^B} [Y_s^\prime] d\overleftarrow{B}_s$$ is
$\mathcal{F}_{t, T}^B$ measurable thanks to the fact that $\xi_T\in
L^2({\mathcal{F}_{T, T}^B}; \mathbb{R})$ and $\mu_t$, $\bar{\mu}_t$,
$\sigma_t$, $\bar{\sigma}_t$, $\kappa_t$, $\rho_t$ are all
$\mathcal{F}_{t, T}^B$-progressively measurable. Note that
$\mathcal{F}_{0, t}^W \vee \mathcal{F}_{T, T+K}^B$ is independent of
$\mathcal{F}_{t, T}^B \vee \sigma(\Theta^\prime)$, hence we know
$$Y_t^\prime = E^{\mathcal{F}_{t,
T}^B} [\Theta^\prime].$$ Thus obviously $Z_t^\prime \equiv 0$, and
moreover, $E^{\mathcal{F}_{t, T}^B}[Y_t^\prime]=Y_t^\prime$,
$E^{\mathcal{F}_{t, T}^B}[Z_t^\prime]=Z_t^\prime$. Then by Comparing
the anticipated BDSDE (\ref{eq duality bdsde}) with (\ref{eq duality
bdsde auxiliary}), together with the uniqueness of their solutions,
we immediately get the desired conclusion.
\end{proof}

The next is our main result.

\begin{theorem}
For any $(\xi, \eta) \in S_{\mathcal{G}}^2(T, T+\delta;
\mathbb{R})\times L_{\mathcal{G}}^2(T, T+\delta; \mathbb{R}^d)$ with
$\xi_T\in L^2({\mathcal{F}_{T, T}^B}; \mathbb{R})$, the solution
$Y_\cdot$ of the anticipated BDSDE (\ref{eq duality bdsde}) can be
given by
\begin{equation*}
Y_t =  E^{\mathcal{F}_{t, T}^B} [X_T \xi_T + \int_t^T \rho_s X_s ds]
+ E^{\mathcal{F}_{t, T}^B} [\int_{T}^{T+\delta}
(\bar{\mu}_{s-\delta} X_{s-\delta} E^{\mathcal{F}_{s-\delta,
T}^B}[\xi_{s}] +\bar{\sigma}_{s-\delta} X_{s-\delta}
E^{\mathcal{F}_{s-\delta, T}^B}[\eta_{s}]) ds],
\end{equation*}
where $X_\cdot$ is the unique solution of delayed DSDE (\ref{eq
duality sde}).
\end{theorem}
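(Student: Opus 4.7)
The plan is to apply It\^o's formula (in the Pardoux-Peng form for BDSDEs) to $X_s Y_s$ on $[t, T]$, take $E^{\mathcal{F}_{t, T}^B}$ to eliminate the $dW$ martingale, and then use a time-shift to pair the delayed $X_{s-\delta}$ with the anticipated $Y_{s+\delta}$.

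By the preceding Proposition, $Z_s \equiv 0$ on $[t_0, T]$ and $Y_s$ is $\mathcal{F}_{s, T}^B$-measurable, so the generator of the ABDSDE simplifies. Denoting by $g^X, g^Y$ and $h^X, h^Y$ the coefficients of $dW$ and $d\overleftarrow{B}$ respectively, one has $g^Y = 0$, $h^Y = -\kappa_s Y_s$, $g^X = \sigma_s X_s + \bar{\sigma}_{s-\delta} X_{s-\delta}$, and $h^X = \kappa_s X_s$. I would invoke the Pardoux-Peng product rule
\[
d(X_s Y_s) = X_s\, dY_s + Y_s\, dX_s + (g^X g^Y - h^X h^Y)\, ds,
\]
and verify three cancellations: the $\mu_s X_s Y_s$ terms in $X_s\, dY_s$ and $Y_s\, dX_s$ cancel; the $\kappa_s^2 X_s Y_s$ in $X_s\, dY_s$ is killed by the correction $-h^X h^Y = \kappa_s^2 X_s Y_s$ coming from the opposite signs of the two backward coefficients; and the two $d\overleftarrow{B}_s$ martingale terms cancel. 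Integrating on $[t, T]$ with $X_t = 1$ and $Y_T = \xi_T$ gives
\begin{align*}
X_T \xi_T - Y_t = &\int_t^T \bigl[-X_s \bar{\mu}_s E^{\mathcal{F}_{s, T}^B}[Y_{s+\delta}] - X_s \bar{\sigma}_s E^{\mathcal{F}_{s, T}^B}[Z_{s+\delta}] - X_s \rho_s + Y_s \bar{\mu}_{s-\delta} X_{s-\delta}\bigr]\, ds \\
&+ \int_t^T Y_s(\sigma_s X_s + \bar{\sigma}_{s-\delta} X_{s-\delta})\, dW_s.
\end{align*}
Taking $E^{\mathcal{F}_{t, T}^B}$ kills the $W$-martingale (by the independence of $W$ and $B$ together with the $L^2$ bounds inherited from Theorem \ref{thm existence uniqueness} and the boundedness of $\mu, \bar{\mu}, \sigma, \bar{\sigma}, \kappa$), and leaves $Y_t$ on the left.

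It remains to reorganize the mixed delayed/anticipated contributions. On $s \in [t, T-\delta]$ the Proposition yields $E^{\mathcal{F}_{s, T}^B}[Y_{s+\delta}] = Y_{s+\delta}$ and $Z_{s+\delta} = 0$. Changing variable $u = s - \delta$ and using $X_u = 0$ for $u \in [t-\delta, t)$, one finds
\[
\int_t^T Y_s \bar{\mu}_{s-\delta} X_{s-\delta}\, ds = \int_t^{T-\delta} Y_{u+\delta} \bar{\mu}_u X_u\, du,
\]
which exactly cancels $\int_t^{T-\delta} X_s \bar{\mu}_s Y_{s+\delta}\, ds$. What survives is the contribution from $s \in [T-\delta, T]$, where $Y_{s+\delta} = \xi_{s+\delta}$ and $Z_{s+\delta} = \eta_{s+\delta}$. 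A final substitution $u = s + \delta$ recasts these surviving terms as the integrals over $[T, T+\delta]$ appearing in the statement.

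The main obstacle is the sign bookkeeping in the Pardoux-Peng It\^o formula, especially the crucial minus sign on $h^X h^Y$ that is what actually produces the cancellation of the $\kappa_s^2 X_s Y_s$ term arising from the $\kappa_s^2$ in the drift of $Y$; once this is set up correctly, the remainder is a careful change of variables leveraging the Proposition.
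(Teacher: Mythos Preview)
Your proposal is correct and follows essentially the same approach as the paper: apply the product It\^o formula to $X_sY_s$, use the preceding Proposition to set $Z\equiv 0$ and make $Y$ purely $\mathcal{F}^B_{\cdot,T}$-measurable, perform the $s\mapsto s\pm\delta$ shift to cancel the interior delayed/anticipated terms (using $X_u=0$ on $[t-\delta,t)$), and condition to kill the $dW$ integral. Two minor points of difference: (i) the paper first establishes existence and uniqueness of the delayed DSDE $X$ by solving it interval-by-interval on $[t,t+\delta],[t+\delta,t+2\delta],\dots$, which you omit; (ii) the paper conditions on the larger $\sigma$-field $\tilde{\tilde{\mathcal G}}_t=\mathcal{F}_t^W\vee\mathcal{F}_T^B$ (for which the $W$-integral is genuinely a martingale increment) and only then uses $Y_t\in\mathcal{F}_{t,T}^B$ together with the tower property to descend to $E^{\mathcal{F}_{t,T}^B}$, whereas you condition on $\mathcal{F}_{t,T}^B$ directly --- your claim is still correct, but the cleanest justification is precisely via that intermediate conditioning.
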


\begin{proof}
We first show that DSDE (\ref{eq duality sde}) has a unique
solution. In fact, when $s\in [t, t+\delta]$,
\begin{equation}\label{eq duality sde 1}
\left\{
\begin{tabular}{rlll}
$dX_s$ &=& $\mu_s X_s ds + \kappa_s X_s d\overleftarrow{B}_s +
\sigma_s X_s d{W}_s, $ & $ s\in[t, t+\delta];$ \vspace{2mm}
\\
$X_t$ &=& $1. $ & $$
\end{tabular}\right.
\end{equation}
Then we can easily obtain a unique solution $\varsigma_\cdot^1$ for
(\ref{eq duality sde 1}). When $s\in [t+\delta, t+2\delta]$,
\begin{equation}\label{eq duality sde 2}
\left\{
\begin{tabular}{rlll}
$dX_s$ &=& $ (\mu_s X_s +
\bar{\mu}_{s-\delta}\varsigma_{s-\delta}^1)ds + \kappa_s X_s
d\overleftarrow{B}_s + (\sigma_s X_s +
\bar{\sigma}_{s-\delta}\varsigma_{s-\delta}^1) d{W}_s, $ & $
s\in[t+\delta, t+2\delta];$ \vspace{2mm}
\\
$X_{t+\delta}$ &=& $\varsigma_{t+\delta}^1. $ & $$
\end{tabular}\right.
\end{equation}
Then we can easily obtain a unique solution $\varsigma_\cdot^2$ for
(\ref{eq duality sde 2}). Similarly ,we can consider all the other
cases when $t\in [t+2\delta, t+3\delta]$, $[t+3\delta, t+4\delta]$,
$\cdots$, $[t+ [\frac{T-t}{\delta}]\delta, T]$. Thus DSDE (\ref{eq
duality sde}) has a unique solution $X \in
\mathcal{S}_{\tilde{\mathcal{G}}}^2(t-\delta, T; \mathbb{R})$ where
$\tilde{\mathcal{G}}_t:= \mathcal{F}_t^W \vee \mathcal{F}_t^B$.

Applying It\^{o}'s formula to $X_sY_s$, according to Proposition
\ref{prop duality}, we have
\begin{equation}\label{eq duality 1}
\begin{tabular}{rlll}
& $X_T Y_T - X_t Y_t - \int_t^T (X_s Z_s + \sigma_s X_s Y_s +
\bar{\sigma}_{s-\delta} X_{s-\delta} Y_s) d{W}_s $ \vspace{5mm} \\
= & $\int_t^T (\bar{\mu}_{s-\delta} X_{s-\delta} Y_s -\bar{\mu}_s
X_s E^{\mathcal{F}_{s, T}^B}[Y_{s+\delta}] + \bar{\sigma}_{s-\delta}
X_{s-\delta} Z_s -\bar{\sigma}_s X_s E^{\mathcal{F}_{s,
T}^B}[Z_{s+\delta}]- \rho_s X_s) ds$ \vspace{5mm} \\
= & $\int_t^{T-\delta} (\bar{\mu}_{s-\delta} X_{s-\delta} Y_s
-\bar{\mu}_s X_s Y_{s+\delta}) ds$ \vspace{5mm} \\
& $+ \int_{T-\delta}^T (\bar{\mu}_{s-\delta} X_{s-\delta} Y_s
-\bar{\mu}_s X_s E^{\mathcal{F}_{s, T}^B}[\xi_{s+\delta}]
-\bar{\sigma}_s X_s E^{\mathcal{F}_{s, T}^B}[\eta_{s+\delta}]) ds -
\int_t^{T} \rho_s X_s ds.$
\end{tabular}
\end{equation}
Write $\Delta=\int_t^{T-\delta} (\bar{\mu}_{s-\delta} X_{s-\delta}
Y_s -\bar{\mu}_s X_s Y_{s+\delta})ds$, then
\begin{equation}\label{eq duality 2}
\begin{tabular}{rlll}
$\Delta =$ & $\int_t^{T-\delta} (\bar{\mu}_{s-\delta} X_{s-\delta}
Y_s -\bar{\mu}_s X_s Y_{s+\delta}) ds= \int_t^{T-\delta}
\bar{\mu}_{s-\delta} X_{s-\delta} Y_s ds -
\int_{t+\delta}^{T} \bar{\mu}_{s-\delta} X_{s-\delta} Y_s ds$ \vspace{5mm} \\
= & $\int_t^{t+\delta} \bar{\mu}_{s-\delta} X_{s-\delta} Y_s ds -
\int_{T-\delta}^{T} \bar{\mu}_{s-\delta} X_{s-\delta} Y_s ds =  -
\int_{T-\delta}^{T} \bar{\mu}_{s-\delta} X_{s-\delta} Y_s ds,$
\end{tabular}
\end{equation}
and the last equality is due to the fact that $X_s=0,\ s\in
[t-\delta, t)$.
\\
Combining (\ref{eq duality 1}) and (\ref{eq duality 2}), we have
\begin{align*}
& X_T Y_T - X_t Y_t - \int_t^T (X_s Z_s + \sigma_s X_s Y_s +
\bar{\sigma}_{s-\delta} X_{s-\delta} Y_s) d{W}_s \\
= & - \int_{T-\delta}^T (\bar{\mu}_s X_s E^{\mathcal{F}_{s,
T}^B}[\xi_{s+\delta}] +\bar{\sigma}_s X_s E^{\mathcal{F}_{s,
T}^B}[\eta_{s+\delta}]) ds - \int_t^{T} \rho_s X_s ds.
\end{align*}
Take conditional expectation with respect to
$\tilde{\tilde{\mathcal{G}}}_t:=\mathcal{F}_t^W \vee
\mathcal{F}_{T}^B$ on both sides, then
\begin{align*}
X_t Y_t  = & E^{\tilde{\tilde{\mathcal{G}}}_t} [X_T Y_T + \int_t^T
\rho_s X_s ds] + E^{\tilde{\tilde{\mathcal{G}}}_t}
[\int_{T-\delta}^T (\bar{\mu}_s X_s E^{\mathcal{F}_{s,
T}^B}[\xi_{s+\delta}] +\bar{\sigma}_s X_s E^{\mathcal{F}_{s,
T}^B}[\eta_{s+\delta}]) ds],
\end{align*}
which implies the desired result when noting that $X_t=1$ and
$Y_t\in \mathcal{F}_{t, T}^B$.
\end{proof}

\section{Conclusion and future work}

In this paper, we have established the existence/uniqueness theorem
and the comparison theorem for the anticipated BDSDEs. Moreover, as
an application, we studied a duality between the anticipated BDSDE
and delayed DSDE, where the BDSDE is of a special form, thus the
duality is somewhat limited. In fact for the general case, it should
be admitted that $\mathcal{G}_t$, which is not a filtration, not
increasing non decreasing, brings the main technical difficulty in
working with the duality problem. For the future work, I will go on
studying this topic and pay more attention to the applications of
such equations.

%% The Appendices part is started with the command \appendix;
%% appendix sections are then done as normal sections
%% \appendix

%% \section{}
%% \label{}

\section*{Acknowledgements}

The author is grateful to the editor and anonymous referees for
their helpful suggestions.

This work is supported by the Mathematical Tianyuan Foundation of
China (Grant No. 11126050), the Specialized Research Fund for the
Doctoral Program of Higher Education of China (Grant No.
20113207120002), and partially supported by the National Natural
Science Foundation of China (Grant No. 11101209).


\begin{thebibliography}{00}

\bibitem{BM}
V. Bally, A. Matoussi, Weak solutions of SPDEs and backward doubly
stochastic differential equations, J. Theoret. Probab. 14 (2001)
125-164.

\bibitem{BBP1997}
G. Barles, R. Buckdahn, E. Pardoux, Backward stochastic differential
equations and integral-partial differential equations, Stochastics
and Stochastics Reports 60 (1997) 57-83.


\bibitem{BQR}
R. Buckdahn, M. Quincampoix, A. R\v{a}\c{s}canu, Viability property
for a backward stochastic differential equation and applications to
partial differential equations, Probab. Theory Relat. Fields 116
(2000) 485-504.


\bibitem{KEM1997}
N. El Karoui, E. Pardoux, M. C. Quenez, Reflected backward SDEs and
American options, in: Numerical Methods in Finance, Cambridge:
Cambridge University Press, 215-231, 1997.


\bibitem{KPQ}
N. El Karoui, S.Peng, M.C.Quenez, Backward stochastic differential
equations in finance, Math. Finance 7 (1997) 1--71.


\bibitem{HL1}
S. Hamad\`ene, J. P. Lepeltier, Zero-sum stochastic differential
games and backward equations, Systems Control Lett. 24 (1995)
259-263.


\bibitem{Lin}
Q. Lin, A generalized existence theorem of backward doubly
stochastic differential equations, Acta Math. Sin. (Engl. Ser.) 26
(2010) 1525-1534.


\bibitem{MS}
A. Matoussi, M. Scheutzow, Stochastic PDEs driven by nonlinear noise
and backward doubly SDEs, J Theoret. Proba. 15 (2002) 1-39.


\bibitem{NP}
D. Nualart, E. Pardoux, Stochastic calculus with anticipating
integrands, Proba. Theory Relat. Fields 78 (1988) 535-581.

\bibitem{PP1}
E. Pardoux, S. Peng, Adapted solution of a backward stochastic
differential equation, Systems Control Lett. 14 (1990) 55--61.


\bibitem{PP2}
E. Pardoux, S. Peng, Backward stochastic differential equations and
quasilinear parabolic partial differential equations, in: Stochastic
partial differential equations and their applications, Lect. Notes
Control Inf. Sci. 176 (1992) 200--217.


\bibitem{PP3}
E. Pardoux, S. Peng, Backward doubly stochastic differential
equations and systems of quasilinear SPDEs, Probab. Theory Related
Fields 98 (1994) 209-227.


\bibitem{PY}
S. Peng, Z. Yang, Anticipated backward stochastic differential
equations, Ann. Probab. 37 (2009) 877--902.


\bibitem{SGL}
Y. Shi, Y. Gu, K. Liu, Comparison theorems of backward doubly
stochastic differential equations and applications, Stoch. Anal.
Appl. 23 (2005) 97-110.

\bibitem{Xu}
X. Xu, Necessary and sufficient condtition for the comparison
theorem of multidimensional anticipated backward stochastic
differential equations, Sci. China Math. 54 (2011) 301-310.

\bibitem{Y}
Z. Yang, Anticipated BSDEs and related results in SDEs, Doctoral
Dissertation Jinan: Shandong University, 2007.
\end{thebibliography}
\end{document}